\def\urltilda{\kern -.15em\lower .7ex\hbox{\~{}}\kern .04em}
\newtheorem{theorem}{Theorem}[section]
\newtheorem{lemma}[theorem]{Lemma}
\theoremstyle{definition}
\newtheorem{definition}[theorem]{Definition}
\newtheorem{cor}[theorem]{Corollary}
\newtheorem{claim}{Claim}
\newtheorem{proposition}[theorem]{Proposition}
\theoremstyle{project}
\theoremstyle{remark}
\numberwithin{equation}{section}
\def\urltilda{\kern -.15em\lower .7ex\hbox{\~{}}\kern .04em}
\newcommand{\bd}{\partial}
\newcommand{\set}[1]{\{ #1 \}}
\newcommand{\rmv}{\smallsetminus}
\newcommand{\wh}[1]{\widehat{ #1 }}
\begin{document}

\title{Hyperbolic Manifolds Containing High Topological Index Surfaces}

\author{Marion Campisi}
\address{San Jos\'{e} State University\\ San Jose, CA 95192}
\email{marion.campisi@sjsu.edu}

\author{Matt Rathbun}
\address{California State University, Fullerton \\
Fullerton, CA 92831}
\email{mrathbun@fullerton.edu}

\begin{abstract} If a graph is in bridge position in a 3-manifold so that the graph complement is irreducible and boundary irreducible, we generalize a result of Bachman and Schleimer to prove that the complexity of a surface properly embedded in the complement of the graph bounds the graph distance of the bridge surface. We use this result to construct, for any natural number $n$, a hyperbolic manifold containing a surface of topological index $n$.  
\end{abstract}

\maketitle
\date{}

\section{Introduction}

It has become increasingly common and useful to measure distances in complexes associated to surfaces between certain important sub-complexes associated with the surface embedded in a 3-manifold. These techniques provide a means to indicate the inherent complexity of links in a manifold, decomposing surfaces, or the manifold itself. In \cite{BacTITS3M} Bachman defined the topological index of a surface as a topological analogue of the index of an unstable minimal surface.  When the distance is small, the notion of topological index refines this distance, by looking at the \emph{homotopy type} of a certain sub-complex. 

In the same way that incompressible surfaces share important properties with strongly irreducible surfaces (distance $> 2$) despite being compressible, the topological index provides a degree of measurement of how similar irreducible, but weakly reducible (distance $= 1$) surfaces are to incompressible surfaces. In a series of papers \cite{BacNTMSI, BacNTMSII, BacNTMSIII}, Bachman has shown that surfaces with a well-defined topological index in a 3-manifold can be put into a sort of normal form with respect to a trianglulation of the manifold, generalizing the ideas of normal form introduced by Kneser \cite{KneGFDM} and almost normal form introduced by Rubinstein \cite{RubAAR3S}, and mirroring results about geometrically minimal surfaces due to Colding and Minicozzi \cite{ColMinSEMSFG3MI,ColMinSEMSFG3MII,ColMinSEMSFG3MIII,ColMinSEMSFG3MIV,ColMinSEMSFG3MV}.

Lee \cite{LeeOTMSHG} has shown that an irredubible manifold containing an incompressible surface contains topologically minimal surfaces of arbitrarily high genus, but has only shown that the topological index of such surfaces is at least two. In \cite{BacJohOEHITMS} Johnson and Bachman showed that surfaces of arbitrarily high index exist.   These surfaces are the lifts of Heegaard surfaces in an n-fold cover of a manifold obtained by gluing together boundary components of the complement of a link in $S^3$.  A by-product of their construction is that the resulting manifolds are toroidal.  

This leaves open the question of whether the much more ubiquitous class of hyperbolic manifolds can also contain high topological index surfaces. Here we construct certain hyperbolic manifolds containing such surfaces.  We generalize the construction in \cite{BacJohOEHITMS}  by gluing along the boundary components of the  complement of a graph in $S^3$ to show:  

\begin{restatable}{theorem}{TheoremMainTheorem}\label{theorem:MainTheorem}

There is a closed 3-manifold $M^1$, with an index 1 Heegaard surface $S$, such that for each $n$, the lift of $S$ to some $n$-fold cover $M^n$ of $M^1$ has topological index $n$.  Moreover, $M^n$ is hyperbolic for all $n$.  
\end{restatable}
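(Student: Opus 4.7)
My plan is to generalize the Bachman--Johnson construction from \cite{BacJohOEHITMS} by replacing the link in $S^3$ with a spatial graph. Having vertices of valence at least $3$ produces boundary components of $E(G)$ of genus at least $2$, which is the key ingredient enabling the final manifold to be hyperbolic rather than toroidal. I begin by selecting a finite graph $G \subset S^3$ in bridge position with respect to a bridge surface $S$, arranged so that $E(G) = S^3 \setminus N(G)$ is irreducible, boundary-irreducible, anannular, every component of $\partial E(G)$ has genus at least $2$, and the graph distance of $S$ with respect to $G$ is much larger than any fixed $n$ of interest. I then identify the boundary components of $E(G)$ in pairs by carefully chosen homeomorphisms, so that $S$ descends to a Heegaard surface of the closed manifold $M^1$ and the natural weak reduction coming from the bridge decomposition realizes $S$ as a topological index $1$ surface.

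For each $n$, I take $M^n$ to be the $n$-fold cyclic cover of $M^1$ produced by cyclically unrolling one of the gluings, so that $M^n$ is obtained by stacking $n$ copies of $E(G)$ end-to-end, and I let $S^n$ denote the lift of $S$. The upper bound on the topological index comes from exhibiting, via the $\mathbb{Z}/n$ deck group action applied to the lifted weak reduction of $S$, a Bachman-type family of compressions that certifies $\mathrm{index}(S^n) \leq n$. For the lower bound, I argue by contradiction: if the index of $S^n$ were at most $n-1$, Bachman's machinery would guarantee a collection of compressing disks on both sides of $S^n$ whose topological disk complex is too highly connected; after equivariant manipulation and projection through the cover, these disks would assemble into an essential surface in $E(G)$ whose complexity is bounded in terms of $n$. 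Applying the generalized Bachman--Schleimer theorem proved earlier in this paper, this complexity would bound the graph distance of $S$ from above, contradicting the large-distance choice of $G$.

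The main obstacle is showing that $M^n$ is hyperbolic for every $n$. By geometrization, it suffices to verify that $M^n$ is irreducible, atoroidal, and not Seifert fibered. Irreducibility follows from the irreducibility of $E(G)$ together with the incompressibility of the genus $\geq 2$ gluing surfaces. Atoroidality is precisely where the graph construction improves on the link case: no essential torus can be parallel to a gluing surface since these have genus $\geq 2$, and any essential torus transverse to the gluing surfaces would restrict to essential annuli in some lift of $E(G)$, which is excluded by the assumption that $E(G)$ is anannular. Non-Seifert-fiberedness follows from the hyperbolicity of $E(G)$ together with atoroidality. The delicate point is that these properties must survive to \emph{every} cyclic cover $M^n$; I expect to control this by choosing the gluing homeomorphisms generically and by exploiting the fact that finite covers of anannular, atoroidal manifolds with incompressible boundary remain anannular and atoroidal.
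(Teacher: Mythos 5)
Your proposal follows the same overall strategy as the paper: construct a graph exterior in $S^3$ with genus-$2$ boundary components and high graph distance, glue boundary components to form $M^1$ and stack $n$ copies to get the cyclic cover $M^n$, bound the index above by exhibiting an essential sphere in the disk complex, bound it below via Bachman's Theorem~3.7 combined with the generalized Bachman--Schleimer bound (Theorem~\ref{theorem:TopologyBoundsDistanceGraphComplement}), and establish hyperbolicity via geometrization. The paper makes the construction concrete in a way you leave implicit: $\Gamma$ is obtained from a two-component $(0,4)$-bridge link $\mathcal{L}$ of $\mathcal{C}$-distance at least $32n+7$ (via Blair--Tomova--Yoshizawa) by adding one bridge arc to each component, giving exactly two genus-$2$ boundary components $G_L$ and $G_K$; those two are glued to each other so that $M^n$ really is the $n$-fold cyclic cover dual to the resulting separating surface.

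The principal place your sketch diverges from the paper is hyperbolicity. You posit anannularity and atoroidality of $E(G)$ as hypotheses and then worry that you need ``generic'' gluings and a claim about finite covers to preserve these properties. Neither is needed. First, anannularity, atoroidality, irreducibility, and $\partial$-irreducibility of $E(\Gamma)$ are not extra assumptions to be arranged: they all follow directly from the large graph distance via Theorem~\ref{theorem:TopologyBoundsDistanceGraphComplement}, since any essential sphere, disk, annulus, or torus would force $d_{\mathcal{G}}(B,\Gamma) \leq 2$. Second, once $M^1$ is hyperbolic, \emph{every} finite cover is automatically hyperbolic, so there is nothing to control cover-by-cover and no genericity to invoke. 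The paper instead argues uniformly for each $M^n$ (Proposition~\ref{prop:Hyperbolic}): any essential surface with $\chi \geq 0$ in $M^n$, isotoped to meet the gluing surfaces minimally, either lies entirely in one copy of $M'$ or meets some copy in a planar piece, and in either case contradicts the distance bound. This is both simpler and more self-contained than your route, and it is the same mechanism already used in the lower bound on the index, so nothing new needs to be imported. You should also be explicit that the $n=1$ case of your index computation shows $S$ has index exactly~$1$; your phrase ``the natural weak reduction\dots realizes $S$ as a topological index~$1$ surface'' gives the upper bound but not the matching lower bound, which again comes from Theorem~\ref{theorem:TopologyBoundsDistanceGraphComplement}.
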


In order to guarantee the hyperbolicity of $M^n$ we must rule out the existence of high Euler characteristic surfaces in the graph complement.  To that end, we define the \emph{graph distance}, $d_{\mathcal{G}}$, of graphs in $S^3$, an analogue of bridge distance of links.  In the spirit of Hartshorn \cite{HarHSHMHB} and Bachman-Schleimer \cite{BacSchlDBP} we show that the complexity of an essential surface is bounded below by the graph bridge distance:

\begin{restatable}{theorem}{TheoremTopologyBoundsDistanceGraphComplement}\label{theorem:TopologyBoundsDistanceGraphComplement}
Let $\Gamma$ be a graph in a closed, orientable 3-manifold $M$ which is in bridge position with respect to a Heegaard surface $B$, so that $M\rmv n(\Gamma)$ is irreducible and boundary irreducible. Let $S$ be a properly embedded, orientable, incompressible, boundary-incompressible, non-boundary parallel surface in $M \rmv n(\Gamma)$. Then $d_{\mathcal{G}}(B, \Gamma)$ is bounded above by $2 ( 2 g(S) + |\bd S| - 1)$.
\end{restatable}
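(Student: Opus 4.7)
The plan is to adapt the sweepout argument of Bachman–Schleimer \cite{BacSchlDBP}, which itself generalizes Hartshorn \cite{HarHSHMHB}, from the link-exterior setting to the graph-exterior setting. The bridge surface $B$ and the graph $\Gamma$ determine a ``generalized Heegaard splitting'' of $M\rmv n(\Gamma)$ whose splitting surface is the punctured surface $P = B \rmv n(\Gamma)$, and vertices of the graph-distance complex live naturally on $P$. I will build a sweepout of $M\rmv n(\Gamma)$ by copies of $P$, track how $S$ sits with respect to this sweepout, and bound how often the combinatorics can change.

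First I would set up the sweepout. Because $\Gamma$ is in bridge position, the compressionbodies $V,W$ cobounding $B$ meet $\Gamma$ in bridge arcs whose endpoints are the punctures of $P$. Removing a neighborhood of $\Gamma$ yields compressionbodies $V',W'$ in $M\rmv n(\Gamma)$ cobounded by $P$, with spines $\Sigma_V \subset V'$ and $\Sigma_W \subset W'$ (the usual spine of each compressionbody, together with the cocores of the bridge arcs). Interpolating between the spines gives a sweepout $\{P_t\}_{t\in(-1,1)}$. After an ambient isotopy, $S$ meets the sweepout generically, so that $S\cap P_t$ is a collection of simple closed curves and arcs in $P_t$ for all but finitely many $t$.

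Next I would label each generic time. Since $M\rmv n(\Gamma)$ is irreducible and $\partial$-irreducible and $S$ is essential and not $\partial$-parallel, standard innermost-disk and outermost-arc arguments applied to $S\cap V'$ and $S\cap W'$ produce, at every generic $t$, a compressing or $\partial$-compressing disk for $P_t$ on at least one side. Label $t$ with $+$ (resp.\ $-$) when such a disk exists in $V'$ (resp.\ $W'$); both labels are permitted simultaneously. Near the $\Sigma_V$-end of the sweepout only $+$ appears, and near the other end only $-$, by the usual argument that an essential $S$ must compress or $\partial$-compress into the large side of a near-degenerate Heegaard surface. Hence somewhere in between the labels must swap.

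Then I would quantify the swap. If one generic $t$ carries both labels, the two disks on opposite sides of $P_t$ have boundaries realizing vertices of the arc-and-curve complex of $P$ that are distance at most one apart, bounding $d_{\mathcal{G}}(B,\Gamma)$ directly. Otherwise, two consecutive generic intervals with opposite labels are separated by a saddle tangency of $S$ with the sweepout, and the standard swap-time argument of \cite{BacSchlDBP} produces disks on both sides whose $P$-boundaries are within distance two in the complex. Concatenating across all label changes bounds $d_{\mathcal{G}}(B,\Gamma)$ by twice the number of saddle critical times. A Morse-theoretic count of these critical times on $S$ gives at most $2g(S)+|\bd S|-1$ saddles, yielding the stated bound.

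The main obstacle I expect is Step~3: in the graph setting one must allow not just compressing disks but also $\partial$-compressing disks and bridge disks (meeting $\Gamma$), and one must verify that each such disk produces a legitimate vertex of the graph-distance complex with the right distance behavior across a saddle. A secondary delicate point is checking that the extreme-label behavior of Step~2 really holds for a surface $S$ with boundary on $\bd n(\Gamma)$; this is exactly where the hypotheses of incompressibility, $\partial$-incompressibility, and non-$\partial$-parallelism feed into the argument, following the template of Section~5 of \cite{BacSchlDBP}.
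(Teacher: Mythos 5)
Your proposal takes a genuinely different route from the paper. You propose to run the Bachman--Schleimer sweepout argument directly in the graph exterior: sweep $M\rmv n(\Gamma)$ by copies of the punctured bridge surface, label generic levels by which side admits a compression or $\bd$-compression, and bound the number of label swaps by a Morse count on $S$. The paper instead \emph{doubles} $M\rmv n(\Gamma)$ along $\bd n(\Gamma)$ to get a closed manifold $\wh{M}$ with a closed incompressible surface $\wh{S}$ and a Heegaard surface $\wh{B}$, and then reduces to Hartshorn's machinery for closed surfaces. The technical core of the paper's proof is not the sweepout at all but rather a sequence of claims establishing that, after isotoping $S$ to minimize $|S\cap B|$, the surface pieces $\wh{S}\cap\wh{V}$ and $\wh{S}\cap\wh{W}$ are incompressible with no $\bd$-parallel annuli, and -- crucially -- that every elementary $\bd$-compression of $\wh{S}$ can be chosen \emph{symmetric across the doubling surface $G$}, so that the resulting path in $\mathcal{C}(\wh{B})$ descends to a path in $\mathcal{AC}(B')$ connecting $\mathcal{GD}_V$ to $\mathcal{GD}_W$. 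Each approach buys something: yours, if completed, would likely recover the sharper Bachman--Schleimer-style bound $2g(S)+|\bd S|$ and avoid the need for a minimal starting position; the paper's doubling trick accepts a worse constant $2(2g(S)+|\bd S|-1)$ and a minimality hypothesis in exchange for sidestepping the delicate boundary bookkeeping entirely -- a tradeoff the authors state explicitly.

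That said, your proposal as written has a real gap, and you have named it yourself without resolving it. In the graph setting, the disks produced at generic levels and at saddle transitions come in several flavors -- ordinary compressing disks, $\bd$-compressing disks meeting $\bd n(\Gamma)$, and graph-bridge disks -- and you must verify that each one actually yields a vertex of the correct subcomplex $\mathcal{GD}_V$ or $\mathcal{GD}_W$ of $\mathcal{AC}(B')$, and that the swap lemma still moves at most distance two in $\mathcal{AC}(B')$ rather than in some larger complex. You also assert the extreme-label behavior near the spines without argument, and that is exactly where the hypotheses that $M\rmv n(\Gamma)$ is irreducible and $\bd$-irreducible and that $S$ is essential and non-$\bd$-parallel must enter; for a graph exterior with vertices of valence $\geq 3$, $\bd n(\Gamma)$ has higher genus and the spine-end analysis of Bachman--Schleimer does not transfer verbatim. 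These are precisely the ``technical details of \cite{BacSchlDBP} necessary to treat the boundary components'' that the paper's doubling argument is designed to avoid, so to make your route work you would need to supply a graph-exterior version of their Lemmas on labeling and the swap, not merely cite the knot case.
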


In Section \ref{section:Definitions} we lay out the definitions of the various complexes and distances we will use, and prove Theorem \ref{theorem:TopologyBoundsDistanceGraphComplement}. In Section \ref{section:MainTheorem}, we prove Theorem \ref{theorem:MainTheorem}.

\section{Definitions}
\label{section:Definitions}

Given a link $\mathcal{L}\subset S^3$, a \emph{bridge sphere for $\mathcal{L}$} is a sphere, $B$, embedded in $S^3$, intersecting the link $\mathcal{L}$ transversely, and dividing $S^3$ into two $3$-balls, $V$ and $W$, so that there exist disks $D_V$ and $D_W$ properly embedded in $V$ and $W$, respectively, so that $\mathcal{L} \cap V \subset D_V$ and $\mathcal{L} \cap W \subset D_W$ are each a collection of arcs.

In \cite{GodBITC3S}, Goda introduced the notion of a bridge sphere for a spatial $\theta$-graph, and this was extended by Ozawa in \cite{OzaBPRSG}. A \emph{bridge sphere for a (spatial) graph $\Gamma$} is a sphere, $B$, embedded in $S^3$, instersecting $\Gamma$ transversely in the interior of edges, and dividing $S^3$ into two $3$-balls, $V$ and $W$, so that there exist disks $D_V$ and $D_W$ properly embedded in $V$ and $W$, respectively, so that $\Gamma \cap V \subset D_V$ and $\Gamma \cap W \subset D_W$ are each a collection of trees and/or arcs. 

If $B$ is a bridge sphere for a link $\mathcal{L}$, then a \emph{bridge disk} is a disk properly embedded in one of the components of $\overline{(S^3 \rmv n(\mathcal{L})) \rmv B)}$, whose boundary consists of exactly two arcs, meeting at their endpoints, with one arc essential in $B \rmv n(\mathcal{L})$, and the other essential in $\bd n(\mathcal{L})$. We refer to the arc in the boundary of the disk that is contained in $B$ as a \emph{bridge arc}. Similarly, if $B$ is a bridge sphere for a graph $\Gamma$, then a \emph{graph-bridge disk} is a disk properly embedded in one of the components of $\overline{(S^3 \rmv n(\Gamma)) \rmv B)}$, whose boundary consists of exactly two arcs, meeting at their endpoints, with one arc essential in $B \rmv n(\Gamma)$, and the other essential in $\bd n(\Gamma)$. We refer to the arc in the boundary of the disk that is contained in $B$ as a \emph{graph-bridge arc}.

\begin{definition}
The \emph{curve complex} for a surface $B$ with (possibly empty) boundary is the complex with vertices corresponding to the isotopy classes of essential simple closed curves in $B$, so that a collection of vertices defines a simplex if representatives of the corresponding isotopy classes can be chosen to be pairwise disjoint. We will denote the curve complex for a surface $B$ by $\mathcal{C}(B)$.
\end{definition}

\begin{definition}
The \emph{arc and curve complex} for a  surface $B'$ with boundary is the complex with vertices corresponding to the (free) isotopy classes of essential simple closed curves and properly embedded arcs in $B'$. A collection of vertices defines a simplex if representatives of the corresponding isotopy classes can be chosen to be pairwise disjoint. We will denote the arc and curve complex for a surface $B'$ by $\mathcal{AC}(B')$.  
\end{definition}

If $B$ is a surface embedded in a manifold, and a 1-dimensional complex intersects $B$ transversely, we will refer to the surface obtained by removing a neighborhood of the 1-complex by $B'$. We will often refer to $\mathcal{C}(B')$ simply by $\mathcal{C}(B)$, and $\mathcal{AC}(B')$ simply by $\mathcal{AC}(B)$.

\begin{definition}
Let $B$ be a surface with at least two distinct, essential curves.  Given two collections $X$ and $Y$ of vertices in the complex $\mathcal{C}(B)$ (resp., $\mathcal{AC}(B)$), the distance between $X$ and $Y$ , denoted $d_{\mathcal{C}(B)}(X, Y)$ (resp., $d_{\mathcal{AC}(B)}(X, Y)$), is the minimal number of edges in any path in $\mathcal{C}(B)$ (resp., $\mathcal{AC}(B)$) from a vertex in $X$ to a vertex in $Y$. When the surface is understood, we often just write $d_{\mathcal{C}}$ (resp., $d_{\mathcal{AC}}$).
\end{definition}

We will be working with four subtly different but closely related sub-complexes, and some associated notions of distance.

\begin{definition} Let $B$ be a properly embedded surface separating a manifold $M$ into two components, $V$ and $W$. Define the \emph{disk set of $V$} (resp., \emph{$W$}), denoted $\mathcal{D}_V\subset \mathcal{C}(B)$, (resp. $\mathcal{D}_W\subset \mathcal{C}(B)$), as the set of all vertices corresponding to essential simple closed curves in $B$ that bound embedded disks in $V$ (resp., $W$). Define the \emph{disk set of $B$}, denoted $\mathcal{D}_B$, as the set of all vertices corresponding to essential simple closed curves in $B$ that bound embedded disks in $M$.
\end{definition}

\begin{definition}
Let $B$ be a bridge sphere for a link $\mathcal{L}$, bounding $3$-balls $V$ and $W$, with at least $6$ marked points corresponding to the transverse intersections of $\mathcal{L}$ with $B$. The \emph{distance of the bridge surface}, denoted $d_{\mathcal{C}}(B,\mathcal{L})$, is $d_{\mathcal{C}(B')}(\mathcal{D}_V, \mathcal{D}_W)$, the distance in the curve complex of $B'$ between $\mathcal{D}_V$ and $\mathcal{D}_W$.
\end{definition}

The fundamental building block in our construction will be the exterior of a graph that is highly complex as viewed from the arc and curve complex. The existence of such a block will follow from a result of Blair, Tomova, and Yoshizawa. It is a special case of Corollary 5.3 from \cite{BlaTomYosHDBS}.

\begin{theorem}[\cite{BlaTomYosHDBS}]
\label{theorem:ConstructingHighDistanceLinks}
Given non-negative integers $b$ and $d$, with $b \geq 3$, there exists a $2$-component link $\mathcal{L}$ in $S^3$, and a bridge sphere $B$ for $\mathcal{L}$ so that $\mathcal{L}$ is $b$-bridge with respect to $B$ and $d_{\mathcal{C}}(B, \mathcal{L}) \geq d$.  
\end{theorem}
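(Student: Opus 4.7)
The plan is to produce $\mathcal{L}$ by starting with any 2-component link in $b$-bridge position and twisting across the bridge sphere by a high power of a carefully chosen pseudo-Anosov map; the Masur--Minsky machinery of hyperbolicity and quasi-convexity then forces the bridge distance to grow with the power of the twist.

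First, I would pick a 2-component link $\mathcal{L}_0$ admitting a $b$-bridge sphere $B$, which is easy to build directly since $b\geq 1$ (for instance by linking two trivial $b$-strand tangles). Write $B'$ for the $2b$-punctured sphere obtained by removing a neighborhood of $\mathcal{L}_0$ from $B$, and let $\mathcal{D}_V,\mathcal{D}_W\subset\mathcal{C}(B')$ be the resulting disk sets on either side. Next, I would select a pseudo-Anosov $\phi:B'\to B'$ that preserves the two-coloring of punctures induced by the components of $\mathcal{L}_0$, and form $\mathcal{L}_n\subset S^3$ by cutting $S^3$ along $B$ and regluing the two trivial tangles via $\phi^n$. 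The Alexander trick ensures the ambient manifold is again $S^3$, the regluing carries bridge disks to bridge disks so that $\mathcal{L}_n$ remains $b$-bridge with respect to $B$, and the puncture-preservation of $\phi$ ensures $\mathcal{L}_n$ still has exactly two components. By construction,
\[
d_{\mathcal{C}}(B,\mathcal{L}_n)=d_{\mathcal{C}(B')}\bigl(\mathcal{D}_V,\phi^n(\mathcal{D}_W)\bigr).
\]

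To see that this quantity grows without bound, I would invoke the Masur--Minsky hyperbolicity of $\mathcal{C}(B')$ (valid since $2b\geq 6$), the quasi-convexity of the disk sets $\mathcal{D}_V,\mathcal{D}_W$ in $\mathcal{C}(B')$, and the fact that $\phi$ acts as a loxodromic isometry with positive stable translation length. A standard projection argument in a hyperbolic space then yields $d_{\mathcal{C}(B')}(\mathcal{D}_V,\phi^n(\mathcal{D}_W))\to\infty$ as $n\to\infty$, so choosing $n$ sufficiently large produces the desired $\mathcal{L}=\mathcal{L}_n$. The main obstacle is arranging for $\phi$ simultaneously to act loxodromically on $\mathcal{C}(B')$ and to preserve the puncture coloring: because $b\geq 3$ leaves the mapping class group of $B'$ ample even after restricting to puncture-preserving elements, such $\phi$ exist (e.g.\ via the Thurston construction using Dehn twists along curve systems that are symmetric under the partition), and verifying this richness is the technical heart of the argument.
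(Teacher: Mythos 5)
The paper does not prove this statement; it cites it verbatim as a special case of Corollary~5.3 of Blair--Tomova--Yoshizawa \cite{BlaTomYosHDBS} and uses it as a black box to manufacture the high-distance building block $\mathcal{L}$. Your proposal is therefore an independent argument, not a reconstruction of anything in the paper. It follows the standard ``Hempel-style'' route (glue two trivial $b$-strand tangles by a high power of a pseudo-Anosov and appeal to $\delta$-hyperbolicity of the curve complex), whereas Blair--Tomova--Yoshizawa obtain their lower bound by an explicit, more combinatorial construction that, in particular, yields control in cases where the soft quasi-convexity argument is less convenient. Both routes are in the literature and both can in principle produce the statement as phrased.

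That said, there are genuine gaps in your sketch. First, the quasi-convexity you invoke is not the Masur--Minsky theorem as stated: Masur--Minsky prove that the disk set of a handlebody $H$ is quasi-convex in $\mathcal{C}(\partial H)$, but here $\mathcal{D}_V$ lives in $\mathcal{C}(B')$ where $B'$ is the $2b$-punctured sphere, a \emph{proper subsurface} of $\partial(V\rmv n(\mathcal{L}))$, and $\mathcal{D}_V$ is the subset of disks whose boundaries miss the meridional annuli. Quasi-convexity of this restricted disk set inside $\mathcal{C}(B')$ is true but needs its own citation or proof (subsurface-projection bookkeeping, or a result tailored to compression bodies/bridge spheres); you cannot simply quote Masur--Minsky. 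Second, preserving the two-coloring of punctures is not enough to guarantee the reglued object is still a $2$-component link: $\phi$ also controls how the arcs of the two trivial tangles are concatenated, so you must arrange the induced permutation of punctures within each color class to preserve the cycle structure (fixing each puncture pointwise is the easiest fix, and pseudo-Anosovs with this property exist on a sphere with $2b\ge 6$ punctures). Third, ``a standard projection argument yields $d\to\infty$'' compresses the actual content of the lower bound (one typically needs either the Behrstock/bounded-geodesic-image inequality or a nearest-point-projection estimate for a loxodromic acting on a $\delta$-hyperbolic space with quasi-convex ``horoball-like'' sets at the two ends); this should be spelled out or a precise reference given. None of these is fatal, but as written the argument does not yet close, and in any case it is a different proof from the one the paper relies on.
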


\begin{definition} Let $B$ be a bridge sphere for a link $\mathcal{L}$, bounding $3$-balls $V$ and $W$.  Define the \emph{bridge disk set} of $V$ (resp., $W$), denoted $\mathcal{BD}_V\subset \mathcal{AC}(B)$ (resp., $\mathcal{BD}_W$), as the set of all vertices either corresponding to essential simple closed curves in $B'$ that bound embedded disks in $V \rmv \mathcal{L}$ (resp., $W \rmv \mathcal{L}$), or corresponding to bridge arcs in $B'$.
\end{definition}

\begin{definition}
Let $B$ be a bridge sphere for a link $\mathcal{L}$, bounding $3$-balls $V$ and $W$. The \emph{bridge distance of the bridge surface $B$}, denoted $d_{\mathcal{BD}}(B,\mathcal{L})$ is $d_{\mathcal{AC}(B')}(\mathcal{BD}_V, \mathcal{BD}_W)$, the distance in the arc and curve complex of $B'$ between $\mathcal{BD}_V$ and $\mathcal{BD}_W$.  
\end{definition}

\begin{lemma}[\cite{BlaCamJohTayTomECSK}, Lemma 2]
\label{lemma:DifferenceDistanceCurveComplexArcCurveComplex}
If $B$ is a bridge surface which is not a sphere with four or fewer punctures, then $d_{\mathcal{BD}}(B,\mathcal{L})\leq d_{\mathcal{C}}(B,\mathcal{L})\leq 2d_{\mathcal{BD}}(B,\mathcal{L})$.
\end{lemma}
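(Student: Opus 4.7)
The plan is to handle the two inequalities separately, expecting the left-hand one to be essentially formal and the right-hand one to require a surrogate-curve construction in the spirit of Korkmaz--Papadopoulos.

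For $d_{\mathcal{BD}}(B,\mathcal{L}) \leq d_{\mathcal{C}}(B,\mathcal{L})$, I would just observe the tautological inclusions built into the definitions: $\mathcal{C}(B')$ sits inside $\mathcal{AC}(B')$ as a subcomplex, and $\mathcal{D}_V \subseteq \mathcal{BD}_V$, $\mathcal{D}_W \subseteq \mathcal{BD}_W$. Thus any geodesic in $\mathcal{C}(B')$ realizing $d_{\mathcal{C}}(B,\mathcal{L})$ is automatically a path of the same length in $\mathcal{AC}(B')$ joining $\mathcal{BD}_V$ to $\mathcal{BD}_W$, immediately giving the bound.

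For $d_{\mathcal{C}}(B,\mathcal{L}) \leq 2\,d_{\mathcal{BD}}(B,\mathcal{L})$, the approach is to start with a geodesic $v_0, v_1, \ldots, v_n$ in $\mathcal{AC}(B')$ with $v_0 \in \mathcal{BD}_V$ and $v_n \in \mathcal{BD}_W$, and convert it into a path in $\mathcal{C}(B')$ of length at most $2n$ by replacing each arc vertex by a \emph{surrogate curve}. For an arc $v_i$, I would let $c_i$ be an essential component of $\partial N(v_i \cup \partial_0 B')$, where $\partial_0 B' \subseteq \partial B'$ consists of the boundary components meeting the endpoints of $v_i$; for a curve $v_i$, set $c_i = v_i$. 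The hypothesis that $B'$ is not a sphere with four or fewer punctures is precisely what guarantees such an essential component exists.

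The heart of the argument is verifying $d_{\mathcal{C}}(c_i, c_{i+1}) \leq 2$ for each consecutive pair. When at least one of $v_i, v_{i+1}$ is a curve, it can be isotoped into the interior of $B'$ and so lies disjoint from the thin regular neighborhood producing the other's surrogate, giving $\mathcal{C}$-distance $\leq 1$. The principal obstacle is the case where both $v_i$ and $v_{i+1}$ are arcs that share an endpoint puncture: the defining neighborhoods overlap and $c_i, c_{i+1}$ can genuinely intersect. I would resolve this by passing to the enlarged neighborhood $N(v_i \cup v_{i+1} \cup \partial B')$ and extracting an essential boundary component, which is a curve disjoint from both arcs; with an appropriate compatible choice of surrogates among the allowed essential boundary components, this intermediate curve lands within $\mathcal{C}$-distance one of each $c_i$ and $c_{i+1}$. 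Finally, I would verify the endpoints land in the right disk sets: when $v_0$ is a curve this is immediate from $v_0 \in \mathcal{BD}_V$, and when $v_0$ is a bridge arc with bridge disk $D \subset V \rmv \mathcal{L}$, the surrogate $c_0$ is, up to isotopy, the boundary of the compressing disk obtained by capping $D$ off along a strip in $\partial n(\mathcal{L})$, so $c_0 \in \mathcal{D}_V$; the argument for $v_n$ and $\mathcal{D}_W$ is symmetric.
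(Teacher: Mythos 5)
The paper does not actually reprove this lemma; it is quoted directly from Blair--Campisi--Johnson--Taylor--Tomova, so there is no in-paper argument against which to compare. That said, your reconstruction is correct and is, to the best of my knowledge, essentially the argument in that cited source. The left inequality is exactly the formal subcomplex/subset observation you give. For the right inequality, the surrogate-curve projection $\mathcal{AC}(B') \to \mathcal{C}(B')$ (Korkmaz--Papadopoulos style) is the right tool, and you correctly isolate the only non-routine case, namely two consecutive arcs incident to a common component of $\partial B'$, resolving it by taking a boundary component of the enlarged neighborhood $N(v_i \cup v_{i+1} \cup \partial_0 B')$, which contains both small neighborhoods and hence has its boundary disjoint from both $c_i$ and $c_{i+1}$; the hypothesis that $B'$ has at least five punctures is exactly what keeps all of these surrogate and intermediate boundary components essential. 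Two points worth spelling out if this were written in full: when an arc $v_i$ has both endpoints on a single boundary circle, $\partial N(v_i \cup \partial_0 B')$ has two non-peripheral components and at most one may fail to be essential, so one must pick the essential one (your ``appropriate compatible choice'' flags this); and for the endpoint verification, the disk in $V \rmv n(\mathcal{L})$ bounded by $c_0$ is built concretely from two parallel copies $D_\pm$ of the bridge disk joined along the complementary rectangle $\left(\partial n(\mathcal{L}) \cap V\right) \rmv n(\beta)$, where $\beta = \partial D \cap \partial n(\mathcal{L})$, which is what your phrase ``capping $D$ off along a strip'' is describing. With those details made explicit, the argument is complete.
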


\begin{definition}
Let $B$ be a bridge sphere for graph $\Gamma$, bounding 3-balls $V$ and $W$. The \emph{graph disk set} of $V$ (resp., $W$) denoted $\mathcal{GD}_V\subset \mathcal{AC}(B)$ (resp., $\mathcal{GD}_W\subset \mathcal{AC}(B)$), is the set of all vertices either corresponding to essential simple closed curves in $B \rmv n(\Gamma)$ that bound embedded disks in $V \rmv n(\Gamma)$ (resp., $W \rmv n(\Gamma)$), or corresponding to graph-bridge arcs in $B \rmv n(\Gamma)$.
\end{definition}

\begin{definition}
Let $B$ be a bridge sphere for graph $\Gamma$.   The \emph{graph distance of the bridge surface}, denoted $d_{\mathcal{G}}(B, \Gamma)$ is $d_{\mathcal{AC}(B')}(\mathcal{GD}_V, \mathcal{GD}_W)$, the distance in the arc and curve complex of $B' = B \rmv n(\Gamma)$ between $\mathcal{GD}_V$ and $\mathcal{GD}_W$.  
\end{definition}

\begin{lemma}
\label{proposition:disjointdisk}
Let $\mathcal{L}$ be a link in bridge position with respect to bridge sphere $B$, bounding 3-balls $V$ and $W$, and let $\Gamma_{\mathcal{L}}$ be a graph in bridge position with respect to $B$ formed by adding edges to $\mathcal{L}$ in $V$ that are simultaneously parallel into $B$ in the complement of $\mathcal{L}$, and so that $\Gamma_{\mathcal{L}}\cap V$ has at least two components.  

If $D\subset (V \rmv n(\Gamma_{\mathcal{L}}))$ is a graph-bridge disk for $\Gamma_{\mathcal{L}}$, then there is a bridge disk $D'$ for $\mathcal{L}$ in $(V \rmv n(\mathcal{L}))$ which is disjoint from $D$.  
\end{lemma}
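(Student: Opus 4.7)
My plan is to produce $D'$ by selecting a bridge disk for a bridge arc of $\mathcal{L}$ lying in a component of $\Gamma_{\mathcal{L}} \cap V$ distinct from the one associated with $D$, and then applying standard surgery to make it disjoint from $D$.

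First I observe that since each added edge $e_i$ lies in the interior of $V$, we have $n(\Gamma_{\mathcal{L}}) \cap B = n(\mathcal{L}) \cap B$, so $B \rmv n(\Gamma_{\mathcal{L}}) = B \rmv n(\mathcal{L})$; in particular the $B$-side arc $\alpha$ of $\partial D$ is automatically an essential arc in $B \rmv n(\mathcal{L})$. Next, since $\partial n(\Gamma_{\mathcal{L}}) \cap V$ has one connected component per component of $\Gamma_{\mathcal{L}} \cap V$, the arc $\beta := \partial D \cap \partial n(\Gamma_{\mathcal{L}})$ lies in a component $C_\beta$ corresponding to a specific component $\Gamma_\beta$ of $\Gamma_{\mathcal{L}} \cap V$. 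By hypothesis there exists another component $\Gamma' \neq \Gamma_\beta$; since every added edge has its endpoints on $\mathcal{L}$, the component $\Gamma'$ must contain at least one bridge arc $\ell$ of $\mathcal{L}$.

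Because $\mathcal{L}$ is in bridge position with respect to $B$, there is a bridge disk $D_\ell$ for $\ell$ in $V \rmv n(\mathcal{L})$, and I will choose it inside a regular neighborhood of $\ell$ so that its $\partial n(\mathcal{L})$-side arc $\beta_\ell$ lies on the annulus of $\partial n(\mathcal{L}) \cap V$ associated with $\ell$. This annulus sits inside the $\Gamma'$-component of $\partial n(\Gamma_{\mathcal{L}}) \cap V$, which is disjoint from $C_\beta$; hence $\beta_\ell \cap \beta = \emptyset$, and any boundary intersection of $\partial D_\ell$ with $\partial D$ must lie on $B$, in $\alpha \cap \alpha_\ell$, where $\alpha_\ell := \partial D_\ell \cap B$. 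I then apply the standard innermost-disk and outermost-arc surgery in the handlebody $V \rmv n(\mathcal{L})$: interior circles of $D_\ell \cap D$ bound subdisks on each of $D_\ell$ and $D$ that together cobound a ball by irreducibility, and may be removed by isotopy; interior arcs are removed by outermost-arc surgery, where an outermost arc on $D$ cuts off a subdisk of $D$ whose other boundary arc can be taken to lie in $\alpha \subset B$, and replacing the $\beta_\ell$-containing subdisk of $D_\ell$ by a push-off of this subdisk of $D$ yields a new disk with strictly fewer intersections with $D$, while preserving the $\partial n(\mathcal{L})$-side arc $\beta_\ell$ intact.

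The main obstacle will be verifying that after the outermost-arc surgeries, the resulting disk still has the correct bridge disk structure, i.e., that its $B$-side arc remains essential in $B \rmv n(\mathcal{L})$. If the $B$-side arc becomes inessential in a given step, one can further boundary-compress or reselect the outermost arc, and the minimization of $|D_\ell \cap D|$ guarantees that the process terminates in finitely many steps at a valid bridge disk $D'$ for $\mathcal{L}$ with $D' \cap D = \emptyset$.
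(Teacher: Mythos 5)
Your proposal takes essentially the same approach as the paper: fix a bridge disk for $\mathcal{L}$ associated to a component of $\Gamma_{\mathcal{L}}\cap V$ other than the one $D$ meets, observe that all boundary intersections with $\partial D$ lie on $B$, and then eliminate circles of intersection by irreducibility and arcs by outermost-disk surgery. One slip: you write of ``replacing the $\beta_\ell$-containing subdisk of $D_\ell$,'' but to preserve the $\partial n(\mathcal{L})$-side arc you must keep the subdisk of $D_\ell$ containing $\beta_\ell$ and replace the \emph{other} one with (a push-off of) the outermost subdisk of $D$; your concluding clause shows you meant this. Also, your assertion that an outermost arc can be taken to cut off a subdisk of $D$ meeting $\partial D$ only in $\alpha$ is correct but deserves a sentence of justification (the paper handles it with the $\gamma$-then-outermost two-step choice), and your flagged concern about essentiality of the resulting $B$-arc is a real subtlety that the paper also leaves implicit rather than resolving.
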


\begin{proof}
Let $\Gamma_{1}$, \dots, $\Gamma_{\ell}$ be the connected components  $\Gamma_{\mathcal{L}}\cap V$, and let $\Gamma_i$ be the component of $\Gamma_{\mathcal{L}} \cap V$ to which $D$ is incident.  

Over all bridge disks  $E\subset V$  for $\mathcal{L}$ disjoint from $ \Gamma_i$, choose one which minimizes $|D\cap E|$.  Suppose the intersection is non-empty.  Any loops of intersection can be removed because $(V \rmv n(\Gamma))$ is a handlebody and therefore irreducible.   Any points of intersection between $\partial D$ and $\partial E$ are contained in $\partial D \cap B$ and $\partial E\cap B$.    Choose an arc $\gamma$ of $|D\cap E|$.  The arc $\gamma$ cuts $D$ into two disks $D_{\gamma_1}$ and $D_{\gamma_2}$.  For one of $i=1$ or $2$, $\partial D_{\gamma_i} \cap \partial D$ is contained in $B$.  Call that disk $D_{\gamma}$.   Consider an  arc $\alpha$ of $|D\cap E|$ outermost in $D_{\gamma}$.  If the interior of $D_{\gamma}$ is disjoint from $E$ then take $\alpha$ to be $\gamma$.  The arc $\alpha$ cuts off a disk $D_{\alpha}$ from $D_{\gamma}$ and cuts $E$ into two disks $E_1$ and $E_2$ only one of whose (say $E_2$) boundary is incident to $\mathcal{L}$.  The disk $E_2\cup D_{\alpha}=E'$ is a bridge disk for $\mathcal{L}$ and intersects $D$ fewer times than $E$, contradicting  the minimality of $|D\cap E|$.  
\end{proof}

The above implies that the distance in the arc and curve complex of $B \rmv n(\Gamma)$ between $\mathcal{GD}_V$ and $\mathcal{BD}_V$ is less than or equal to one.  

\begin{cor} \label{cor:ACtoG} Let $\mathcal{L}$ and $\Gamma_{\mathcal{L}}$ be as above.  
Then $d_\mathcal{\mathcal{BD}}(B,\mathcal{L}) \leq 1 + d_\mathcal{G}(B,\Gamma_{\mathcal{L}})$.  
\end{cor}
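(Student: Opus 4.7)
The plan is to apply the triangle inequality in $\mathcal{AC}(B')$, exploiting the fact that the extra edges of $\Gamma_{\mathcal{L}}$ all sit inside $V$. Because those edges are parallel into $B$ in the complement of $\mathcal{L}$, they can be taken to lie in the interior of $V$ and so contribute no new punctures on $B$; thus $B \rmv n(\Gamma_{\mathcal{L}})$ coincides with $B \rmv n(\mathcal{L})$ as a surface, and all four vertex sets $\mathcal{BD}_V$, $\mathcal{BD}_W$, $\mathcal{GD}_V$, $\mathcal{GD}_W$ live in the same arc and curve complex $\mathcal{AC}(B')$. On the $W$ side, $\Gamma_{\mathcal{L}} \cap W = \mathcal{L} \cap W$, so every graph-bridge disk for $\Gamma_{\mathcal{L}}$ in $W$ is literally a bridge disk for $\mathcal{L}$ in $W$, and every disk in $W \rmv n(\Gamma_{\mathcal{L}})$ is one in $W \rmv n(\mathcal{L})$; hence $\mathcal{GD}_W = \mathcal{BD}_W$.

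It therefore suffices to show that every $v \in \mathcal{GD}_V$ lies within distance one of some vertex of $\mathcal{BD}_V$ in $\mathcal{AC}(B')$. If $v$ corresponds to an essential simple closed curve bounding a disk in $V \rmv n(\Gamma_{\mathcal{L}})$, the same disk is embedded in the larger space $V \rmv n(\mathcal{L})$, so $v \in \mathcal{BD}_V$ already. If instead $v$ corresponds to a graph-bridge arc $\alpha$, pick any graph-bridge disk $D$ realizing $\alpha$ and apply Lemma \ref{proposition:disjointdisk} to produce a bridge disk $D'$ for $\mathcal{L}$ disjoint from $D$; the bridge arc $\alpha' = D' \cap B$ is then a vertex of $\mathcal{BD}_V$ disjoint from $\alpha$, giving distance at most one as required.

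Combining these two observations, any geodesic $v_0, v_1, \ldots, v_k$ of length $k = d_{\mathcal{G}}(B, \Gamma_{\mathcal{L}})$ from $\mathcal{GD}_V$ to $\mathcal{GD}_W$ can be extended by a single edge at its initial endpoint to a path from a vertex of $\mathcal{BD}_V$ to $v_k \in \mathcal{GD}_W = \mathcal{BD}_W$, yielding $d_{\mathcal{BD}}(B, \mathcal{L}) \leq 1 + k$. The one potentially subtle ingredient is Lemma \ref{proposition:disjointdisk} itself, which has already been established; everything else is a bookkeeping exercise checking that the edges added inside $V$ neither change the surface $B'$ nor disturb the $W$ side of the decomposition.
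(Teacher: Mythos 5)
Your proof is correct and follows essentially the same route as the paper: establish $\mathcal{GD}_W = \mathcal{BD}_W$, use Lemma~\ref{proposition:disjointdisk} to show $d_{\mathcal{AC}}(\mathcal{GD}_V, \mathcal{BD}_V) \le 1$, and conclude by the triangle inequality. Your write-up is somewhat more careful in spelling out why $\mathcal{GD}_W = \mathcal{BD}_W$ (identifying graph-bridge disks in $W$ with ordinary bridge disks rather than asserting there are none) and in noting that closed curves in $\mathcal{GD}_V$ already lie in $\mathcal{BD}_V$, but these are elaborations on the paper's argument rather than a different one.
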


\begin{proof}
Since $W \rmv n(\Gamma)$ contains no graph-bridge disks, $\mathcal{GD}_W=\mathcal{BD}_W$.  Thus $d_\mathcal{G}(B,\Gamma_\mathcal{L})=d_{\mathcal{AC}}(\mathcal{GD}_V, \mathcal{BD}_W)$.  Lemma \ref{proposition:disjointdisk} shows that $d_{\mathcal{AC}}(\mathcal{GD}_V, \mathcal{BD}_V)\leq 1$, and so by the triangle inequality we have that $d_\mathcal{BD}(B,\mathcal{L}) \leq 1 + d_\mathcal{G}(B,\Gamma_\mathcal{L})$.
\end{proof}

In \cite{HarHSHMHB}, Hartshorn proved that an essential closed surface in a 3-manifold creates an upper bound on the possible distances of Heegaard splittings of that manifold in terms of the genus of the essential surface. 

\begin{theorem}[Hartshorn, Theorem 1.2 of \cite{HarHSHMHB}] \label{theorem:Hartshorn} Let $M$ be a Haken 3-manifold containing an incompressible surface of genus $g$. Then any Heegaard splitting of $M$ has distance at most $2g$.
\end{theorem}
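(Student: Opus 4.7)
The plan is to use the incompressible surface $S$ as a template for building a path in $\mathcal{C}(B)$ from $\mathcal{D}_V$ to $\mathcal{D}_W$ of length at most $2g$, where $V$ and $W$ are the two handlebodies of the Heegaard splitting. First, I would isotope $S$ to minimize $|S \cap B|$ over all surfaces isotopic to $S$. A standard innermost-disk/outermost-arc argument, combined with the incompressibility of $S$, shows that at such a minimum every curve of $S \cap B$ is essential in $B$, and that the subsurfaces $P = S \cap V$ and $Q = S \cap W$ are each incompressible in their respective handlebodies.

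Next, I would iteratively boundary-compress $P$ inside $V$. Because $V$ is a handlebody and $P$ is incompressible with essential boundary, every component of $P$ that is not a disk admits a boundary-compressing disk $D \subset V$ with $\partial D = \alpha \cup \beta$, for an arc $\alpha \subset P$ and an arc $\beta \subset B$ whose interior is disjoint from $\partial P$. Performing the boundary compression yields a strictly simpler incompressible surface $P'$ whose new boundary curves in $B$ are disjoint from $\beta$, and hence disjoint from any component of $\partial P$ not containing $\partial \alpha$. Each such compression therefore produces a new vertex in $\mathcal{C}(B)$ adjacent to a vertex already present in the evolving path. Iterating until $P$ becomes a disjoint union of disks produces a sequence of pairwise-consecutive-disjoint curves in $\mathcal{C}(B)$, starting at a component of $\partial P \subset S \cap B$ and ending at a curve in $\mathcal{D}_V$. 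The symmetric procedure applied to $Q \subset W$ yields a second sequence ending at a curve in $\mathcal{D}_W$, and joining the two sequences along a shared component of $S \cap B$ yields a path from $\mathcal{D}_V$ to $\mathcal{D}_W$.

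Finally, I would bound the total length of the combined path by a Euler-characteristic count. Each boundary compression raises the Euler characteristic of the ambient surface by exactly $1$, so the number of compressions needed to reduce $P$ (respectively, $Q$) to a disk union is at most $-\chi(P)$ (respectively $-\chi(Q)$). Summing, and using $\chi(P) + \chi(Q) = \chi(S) = 2 - 2g$, the combined path has length at most $-\chi(S) + O(1) = 2g - 2 + O(1)$; refining the constant by noting that the terminal disk stages already supply vertices of $\mathcal{D}_V$ and $\mathcal{D}_W$, and that the intermediate join along a shared curve of $S \cap B$ costs nothing, sharpens the bound to $2g$.

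The main obstacle will be the delicate combinatorial bookkeeping in this count, particularly the corner cases of boundary compressions whose arc $\beta$ has both endpoints on a single component of $\partial P$ (causing that component to split or recombine in a way that may or may not yield new essential curves in $B$) and of intermediate boundary-parallel components that must be discarded without contributing to the curve-complex path. One must also verify termination of the iterative boundary-compression procedure, which follows from incompressibility together with the strict monotonicity of $\chi$ at each step.
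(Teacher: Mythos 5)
The paper does not prove this statement---it is Hartshorn's theorem, cited from \cite{HarHSHMHB} and used as a black box (the authors invoke Lemmas 4.4 and 4.5 of that paper in their own Theorem \ref{theorem:TopologyBoundsDistanceGraphComplement}). So there is no ``paper's own proof'' to compare against; I'll assess your sketch against Hartshorn's actual argument.

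Your high-level plan is in the spirit of Hartshorn's: minimize $|S\cap B|$, verify that $P = S\cap V$ and $Q = S\cap W$ are incompressible with essential boundary, and then use iterated $\partial$-compressions in the handlebodies to manufacture a path of essential curves in $\mathcal{C}(B)$ whose endpoints lie in $\mathcal{D}_V$ and $\mathcal{D}_W$. That much is correct and is the right framework. Where the sketch goes wrong is the Euler characteristic accounting, and the error is in a direction that matters. You assert that the number of $\partial$-compressions needed to reduce $P$ to a disjoint union of disks is \emph{at most} $-\chi(P)$. Since each $\partial$-compression raises $\chi$ by exactly $1$, and a single disk already has $\chi = 1$, reducing a connected $P$ to a single disk takes $1 - \chi(P)$ compressions, which \emph{exceeds} $-\chi(P)$; producing a union of $k$ disks takes $k - \chi(P) \ge 1 - \chi(P)$. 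So $-\chi(P)$ is a \emph{lower} bound on the number of compressions, not an upper bound, and your claimed total of $-\chi(S) + O(1) = 2g - 2 + O(1)$ understates the path length. The correct accounting is that you need at most $1-\chi(P)$ steps on the $V$-side to reach a disk of $\mathcal{D}_V$ and at most $1-\chi(Q)$ steps on the $W$-side, giving a path of length at most $\bigl(1-\chi(P)\bigr) + \bigl(1-\chi(Q)\bigr) = 2 - \chi(S) = 2g$. Your ``refining the constant'' remark has the adjustment going the wrong way: it is not a sharpening from $2g-2$ down, but a required increase of $+2$ up to $2g$, one for each handlebody side.

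A second, more structural issue is the ``join along a shared curve'' step. You produce two independent sequences of curves, one from compressing $P$ and one from compressing $Q$, and propose to concatenate them at a common component of $\partial P = \partial Q = S \cap B$. But after the first compression in $V$ the curve system $S\cap B$ has already changed, and there is no guarantee that the curve from which you launch the $W$-side sequence is the same one you started the $V$-side sequence from, nor that it persists through the $V$-side compressions. Hartshorn avoids this by sweeping the \emph{single} surface $S$ monotonically across $B$ (his Lemmas 4.4 and 4.5), so that each stage $S_i$ comes with a distinguished curve $c_i \subset S_i \cap B$, consecutive $c_i$ are disjoint, and the two ends of the sweep produce curves bounding disks in $V$ and in $W$ respectively. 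If you want to keep your two-sided formulation you will need to argue explicitly that both sequences can be anchored at the same curve of the minimal-position intersection $S\cap B$; otherwise you should switch to the one-sided sweep. As you anticipated, these bookkeeping points are where the real work of the theorem lies.
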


This idea has been generalized in numerous ways, including by Bachman and Schleimer, who show in \cite{BacSchlDBP} that the distance of a bridge Heegaard surface in a knot complement is bounded by twice the genus plus the number of boundary components of an essential properly embedded surface.

\begin{theorem}[Bachman-Schleimer, Theorem 5.1 of \cite{BacSchlDBP}] \label{theorem:Bachman-Schleimer} Let $K$ be a knot in a closed, orientable 3-manifold $M$ which is in bridge position with respect to a Heegaard surface $B$. Let $S$ be a properly embedded, orientable, essential surface in $M \rmv n(K)$. Then the distance of $K$ with respect to $B$ is bounded above by twice the genus of $S$ plus $| \bd S|$.
\end{theorem}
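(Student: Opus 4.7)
The plan is to adapt Hartshorn's argument from \cite{HarHSHMHB} to the bridge-position setting, following the compression-sequence strategy used in Bachman-Schleimer. First I would isotope $S$ inside $M \rmv n(K)$ so that it is transverse to $B$ and $|S \cap B|$ is minimal. Combined with the essentialness of $S$ together with the irreducibility and boundary-irreducibility of $M \rmv n(K)$, this minimization forces every component of $S \cap B$ to be essential on both $S$ and $B' = B \rmv n(K)$.

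Next, let $S_V = S \cap V$ and $S_W = S \cap W$. Since $K$ is in bridge position, each of the tangles $K \cap V$ and $K \cap W$ is trivial, so $V \rmv n(K)$ and $W \rmv n(K)$ are handlebodies. Consequently $S_V$ cannot be simultaneously incompressible and boundary-incompressible in $V \rmv n(K)$ unless it is a disjoint union of disks (together with bridge disks when $\partial S \neq \emptyset$). Whenever $S_V$ is not such a collection, one finds a compressing or boundary-compressing disk $D \subset V \rmv n(K)$ for $S_V$; compressing along $D$ yields a surface $S_V^{(1)}$ with strictly smaller $-\chi$, and the arc or curve $\partial D \cap B'$ is essential on $B'$ and disjoint from every boundary component of $S_V^{(1)}$ on $B'$. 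Iterating this process produces a sequence of consecutively disjoint essential curves on $B'$ starting from an element of $\mathcal{D}_V$ and terminating at a curve of $S \cap B$; a symmetric sequence on the $W$-side extends the path to an element of $\mathcal{D}_W$.

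An Euler characteristic count controls the length of the resulting path in $\mathcal{C}(B')$: each compression decreases $-\chi(S_V)$ by $2$ and each boundary-compression by $1$, while $-\chi(S_V) + (-\chi(S_W)) = -\chi(S) = 2g(S) + |\partial S| - 2$. After accounting for the transitions at both endpoints of the path, the total length is at most $2g(S) + |\partial S|$, giving the claimed bound on the distance of $K$ with respect to $B$.

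The main obstacle will be the positioning step and the verification that each intermediate curve is genuinely essential and disjoint from the next one in the sequence. When $\partial S \neq \emptyset$, arcs on $B'$ enter the picture and compressions must be interleaved with boundary-compressions along bridge disks; one must then argue that none of the new boundary components created is trivial on $B'$ or isotopic to a curve already produced. Precisely here the essentialness of $S$ and the fact that $V \rmv n(K)$ and $W \rmv n(K)$ contain no essential closed surfaces are used to exclude the degenerate scenarios, after which the Euler-characteristic bookkeeping becomes routine.
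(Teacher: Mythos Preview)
The paper does not give its own proof of this theorem; it is quoted without proof as Theorem~5.1 of Bachman--Schleimer \cite{BacSchlDBP}. What the paper \emph{does} prove is the graph-complement analogue, Theorem~\ref{theorem:TopologyBoundsDistanceGraphComplement}, and there the method is genuinely different from yours. Rather than running a compression sequence directly on $S_V$ and $S_W$ inside the handlebodies $V\rmv n(K)$ and $W\rmv n(K)$, the paper doubles $M\rmv n(\Gamma)$ along $\bd n(\Gamma)$ to produce a closed incompressible surface $\wh{S}$ in a closed manifold $\wh{M}$ with Heegaard surface $\wh{B}$, verifies Hartshorn's hypotheses (Claims~\ref{claim:Incompressible}--\ref{claim:NoBoundaryParallelAnnuli}), and then invokes Hartshorn's closed-surface machinery wholesale, the only additional work being to show that each elementary $\bd$-compression can be chosen symmetrically across the doubling locus $G$ so that the resulting path in $\mathcal{C}(\wh{B})$ descends to a path in $\mathcal{AC}(B')$. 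Your outline is instead the direct Bachman--Schleimer strategy, working with compressions and boundary-compressions of $S_V$ and $S_W$ and tracking the induced curves and arcs on $B'$. The doubling route buys the ability to cite Hartshorn off the shelf and to sidestep the delicate arc bookkeeping you flag in your last paragraph, at the cost of the weaker bound $2(2g(S)+|\bd S|-1)$; your direct route recovers the sharp $2g(S)+|\bd S|$ but must confront exactly those arc/essentiality issues, which is the substantive content of \cite{BacSchlDBP}. Your sketch is a reasonable plan for that argument, though the sentence ``each compression decreases $-\chi(S_V)$ by $2$ and each boundary-compression by $1$'' conflates compressions of $S_V$ as an abstract surface with the elementary $\bd$-compressions of $S$ relative to $B$ that actually generate the path in $\mathcal{C}(B')$; the length bound comes from the latter, and getting the count to land exactly at $2g(S)+|\bd S|$ requires the more careful sweep-out/complexity accounting of \cite{BacSchlDBP} rather than the rough inequality you wrote.
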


We will need a yet more general version, since we will be concerned with surfaces properly embedded in \emph{graph} complements.
 
The essence of both results is that the distance of a bridge or Heegaard surface is bounded above in terms of the \emph{complexity} of an essential properly embedded surface. We will generalize this result to link and graph complements, with the additional benefit of avoiding many of the technical details of \cite{BacSchlDBP} necessary to treat the boundary components. Unfortunately, our bound will be worse than that obtained by Bachman and Schleimer, though it will be sufficient for many applications of this type of bound (\emph{e.g.}, \cite{MosSHSHDK}, \cite{DuQiuNUUHSA3M}, \cite{OhsSakSMCGRHSBD}, \cite{BacSDHSSC3M}, and \cite{NamBHDIFMCG}). We note also that our proof requires a minimal starting position similar to that used by Hartshorn, an assumption the Bachman-Schleimer method was able to avoid.

We now prove 
the following.
\TheoremTopologyBoundsDistanceGraphComplement*

\begin{proof}[Proof of Theorem \ref{theorem:TopologyBoundsDistanceGraphComplement}]
In the case that $S$ is closed, we note that the proofs of both Theorem \ref{theorem:Bachman-Schleimer} and Theorem \ref{theorem:Hartshorn} apply to closed surfaces in manifolds with boundary as long as the manifold is irreducible. In the case that $\partial S\neq \emptyset$ we will double $M \rmv n(\Gamma)$ along $\bd n(\Gamma)$ to obtain a closed surface and show that the surface can be made to fulfill all the hypotheses necessary to use the machinery in the proof of Theorem \ref{theorem:Hartshorn} to obtain the bound on distance.

First, isotope $S$ to intersect $B$ minimally, among all isotopy representatives of $S$.  Let $V$ and $W$ be the handlebodies on either side of $B$.   Double $M \rmv n(\Gamma)$ along $\bd n(\Gamma)$, and call the resulting manifold $\wh{M}$.  Let the doubles of $S$, $B$, $V$ and $W$ be $\wh{S}$, $\wh{B}$, $\wh{V}$ and $\wh{W}$, respectively, and let $G$ be $\bd n(\Gamma)$ in $\wh{M}$, with respective copies $M_i$, $S_i$, $B_i$, $V_i$ and $W_i$ for $i=1, 2$.

Note that $\wh{B}$ is a Heegaard surface for $\wh{M}$. (The proof of this is very similar to the proof of Proposition \ref{proposition:HeegaardSurface} below.) Also, note that since $S$ is incompressible and $\bd$-incompressible in $M \rmv n(\Gamma)$, $\wh{S}$ is an incompressible closed surface in $\wh{M}$ and since $\bd n(\Gamma)$ was incompressible in $M \rmv n(\Gamma)$, $G$ is incompressible in $\wh{M}$.  

\begin{claim}
\label{claim:Incompressible}
Each of $\wh{S}\cap \wh{V}$ and $\wh{S}\cap \wh{W}$ are incompressible.  
\end{claim}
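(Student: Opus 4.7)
The plan is to assume, for contradiction, that $\wh{S}\cap \wh{V}$ admits a compressing disk $D\subset \wh{V}$, and to derive a contradiction from the minimality of $|S\cap B|$. I would first make $D$ transverse to $G=\bd n(\Gamma)$, use innermost-disk surgery together with the incompressibility of $G$ in $\wh{M}$ and the irreducibility of $\wh{M}$ (which follows from irreducibility of $M\rmv n(\Gamma)$ and incompressibility of $\bd n(\Gamma)$ in it) to eliminate simple closed curves of $D\cap G$, and then among all such compressing disks choose one minimizing $|D\cap G|$.

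\textbf{The disjoint case.} If $D\cap G=\emptyset$, then $D$ lies in a single copy of $V\rmv n(\Gamma)$, say $V_1$, and is a compressing disk for $S\cap V$ there. Since $\bd D$ is essential in $\wh{S}\cap\wh{V}$ it is essential in $S\cap V$, so by incompressibility of $S$ in $M\rmv n(\Gamma)$ it bounds a disk $E\subset S$, and this disk must meet $B$ (else $\bd D$ would be inessential in $S\cap V$). The $2$-sphere $D\cup E$ bounds a ball in the irreducible manifold $M\rmv n(\Gamma)$, and isotoping $S$ across that ball, replacing $E$ by a parallel copy of $D$, strictly reduces $|S\cap B|$, contradicting minimality.

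\textbf{The intersecting case.} If $D\cap G\neq\emptyset$, let $\alpha\subset D\cap G$ be an outermost arc on $D$, cutting off a subdisk $D_0\subset D$ with $\bd D_0=\alpha\cup\beta$, where $\beta\subset\bd D\subset\wh{S}$. Since $\alpha$ is outermost, $D_0$ lies in one copy, say $V_1\cong V\rmv n(\Gamma)$, with $\alpha$ on $\bd n(\Gamma)$ and $\beta$ an arc on $S$ whose endpoints lie on $\bd S$. If $\beta$ is essential in $S$ as a properly embedded arc, then $D_0$ is a boundary-compressing disk for $S$ in $M\rmv n(\Gamma)$, contradicting the $\bd$-incompressibility of $S$. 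If $\beta$ is inessential, it cobounds a disk $\Delta\subset S$ with an arc $\gamma\subset \bd S$; then $D_0\cup\Delta$ is a disk whose boundary $\alpha\cup\gamma$ lies in $G$, so by incompressibility of $G$ in $\wh{M}$ the curve $\alpha\cup\gamma$ bounds a disk in $G$, and by irreducibility of $\wh{M}$ the resulting sphere bounds a ball, which I would use to isotope $D$ so as to remove $\alpha$ from $D\cap G$, contradicting the minimality of $|D\cap G|$.

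The main obstacle I expect is the ball-swap at the end of the intersecting case: one must verify that the isotopy produces another embedded compressing disk for $\wh{S}\cap\wh{V}$ with strictly fewer intersections with $G$, while tracking how the other arcs of $D\cap G$ sit with respect to $D_0$ and $\Delta$. The argument for $\wh{S}\cap\wh{W}$ is entirely symmetric.
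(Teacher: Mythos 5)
Your argument is sound in outline and takes a genuinely different route from the paper's in the intersecting case. The paper uses its previously established fact that $\wh{S}$ is incompressible in $\wh{M}$ to produce a disk $D'\subset\wh{S}$ with $\bd D'=\bd D$, and then analyzes innermost circles of $(D\cup D')\cap G$ on the sphere $D\cup D'$, pushing $\wh{S}$ across the ball it bounds. You instead work directly with outermost arcs of $D\cap G$ on $D$, invoking $\bd$-incompressibility of $S$ when the cut-off arc $\beta\subset\bd D$ is essential in $S$, and performing a ball-swap (via incompressibility of $G$ and irreducibility of $\wh{M}$) when $\beta$ is inessential; your disjoint case agrees essentially with the paper's. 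Your route is more self-contained in that it never needs the incompressibility of $\wh{S}$ as an intermediate fact, and it makes the role of $\bd$-incompressibility explicit where the paper buries it inside the incompressibility of the doubled surface. The one genuinely delicate step, which you already flagged, is the ball-swap: the disk $\Delta\subset S$ cobounded by $\beta$ and $\gamma$ may meet $\bd D$ in arcs other than $\beta$, so pushing $D_0$ across $\Delta\cup F$ must be set up (say by choosing an innermost such inessential cut-off arc) to produce an embedded compressing disk that still meets $\wh{S}$ only in its boundary, lies in $\wh{V}$, and has strictly fewer arcs of intersection with $G$. The paper's corresponding step is similarly terse (it asserts the innermost circle $\ell$ ``consists of two arcs'' and that one could ``have chosen $D$ to have fewer intersections with $G$''), so this delicacy is inherent to the argument rather than a defect peculiar to your approach.
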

\begin{proof}
If, say, $\wh{S}\cap \wh{V}$ had a compressing disk $D$, then since $\wh{S}$ is incompressible in $\wh{M}$, there would have to be a disk $D'$ in $\wh{S}$ with $\bd D'=\bd D$, and $D'\cap \wh{B}\neq \emptyset$.  We may choose $D$ to be a compressing disk which intersects $G$ minimally.  Further, since $G$ is incompressible, we may choose $D$ to intersect $G$ only in arcs, if at all.   But $\wh{M}$ is irreducible, so $D\cup D'$ bounds a ball and we may isotope $\wh{S}$ across this ball from $D'$ to $D$, lowering the number of intersections between $\wh{S}$ and $\wh{B}$.  

If $D'\cap G = \emptyset$, then this can be viewed as an isotopy of $S$ in $M \rmv n(\Gamma)$ which reduces the number of intersections between $S$ and $B$, a contradiction.  

If $D'\cap G \neq \emptyset$ we still arrive at a contradiction.  Consider a loop, $\ell$, of intersection in $(D\cup D')\cap G$, innermost in $D\cup D'$.  Since $D\cap G$ only contains arcs, $\ell$ consists of two arcs, $\alpha$ and $\alpha '$ in $D$ and $D'$ respectively.  Thus $\ell$ bounds a disk $D_{\ell}$ in $G$,  $\alpha$ cuts off a subdisk $D_{\alpha}$ of $D$ and $\alpha'$ cuts off a subdisk $D_{\alpha'}$ of $D'$, both of which are in either $M_1$ or $M_2$, say $M_1$.  Now we have an isotopy of $S_1$ from $D_{\alpha}\cup D_{\alpha'}$ to $D_{\ell}$

Independent of whether $D_{\alpha'}$ intersected $B$, we could have chosen $D$ to have fewer intersections with $G$, contradicting our choice of $D$ to minimize intersections.  
\end{proof}

\begin{claim}
\label{claim:Essential}
Every intersection of $\wh{S}$ with $\wh{B}$ is essential in $\wh{B}$.  
\end{claim}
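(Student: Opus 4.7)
My plan is to argue by contradiction, closely paralleling the strategy used in Claim~\ref{claim:Incompressible}. Suppose some component of $\wh{S} \cap \wh{B}$ is inessential in $\wh{B}$, and choose such a curve $c$ bounding a disk $E \subset \wh{B}$ whose interior meets $\wh{S} \cap \wh{B}$ in no other curves (an innermost inessential intersection curve). Note that $c$ is either (a) a closed curve in the interior of some $B_i$, coming from a closed-curve component of $S \cap B$, or (b) a closed curve crossing $G \cap \wh{B} = \partial B_1 = \partial B_2$, coming from the doubling of an arc component of $S \cap B$.

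Since $\wh{S}$ is incompressible in $\wh{M}$, the curve $c$ also bounds a disk $E' \subset \wh{S}$, and since $\wh{M}$ is irreducible (as the double of the irreducible manifold $M\rmv n(\Gamma)$ along the incompressible surface $\partial n(\Gamma)$, which becomes $G$), the sphere $E \cup E'$ bounds a ball $Q \subset \wh{M}$. If $Q$ were disjoint from $G$, it would sit inside one of the copies $M_i \cong M \rmv n(\Gamma)$, the isotopy of $\wh{S}$ across $Q$ would descend to an isotopy of $S$, and this isotopy would reduce $|S \cap B|$ by at least the components corresponding to $c$, contradicting the minimal choice of $S$.

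The main obstacle, therefore, is that $E$, $E'$, and $Q$ a priori meet $G$, so the isotopy across $Q$ need not be equivariant under the doubling involution. To remove these intersections I would run nested innermost-disk and outermost-arc arguments on $E \cap G$ and $E' \cap G$, exactly as in the second half of the proof of Claim~\ref{claim:Incompressible}: any innermost loop of $E'\cap G$ bounds a subdisk of $E'$, which together with a disk in $G$ (using that $G$ is incompressible in $\wh{M}$) forms a sphere bounding a ball by irreducibility, through which $\wh{S}$ may be pushed; any innermost loop of $E \cap G$ is a component of $\partial B_i$ bounding a subdisk of $E$ in $B_i$ or $B_{3-i}$, which would force a boundary component of $B' = B \rmv n(\Gamma)$ to bound a disk in $B'$, impossible since $B'$ is a sphere with at least two holes. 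Each such reduction either removes an intersection with $G$ or immediately yields an isotopy of $S$ reducing $|S \cap B|$.

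The hardest case will be~(b), in which $c$ unavoidably crosses $G$ in two points, so $E \cap G$ and $E' \cap G$ must include arcs with endpoints on $c \cap G$. For outermost such arcs in $E'$ I would invoke the $\partial$-incompressibility of $S$ in $M\rmv n(\Gamma)$ (rather than just incompressibility) to produce, together with a disk in $G$, a ball in $\wh{M}$ through which an isotopy of $\wh{S}$ removes the arc; and similarly the analogous outermost arcs in $E$ correspond to $\partial$-compressions of $B'$ along $\partial n(\Gamma)$, which can be ruled out or used to reduce $|S \cap B|$ directly. After all such reductions, $E \cup E'$ is disjoint from $G$ and the argument of the preceding paragraph completes the contradiction.
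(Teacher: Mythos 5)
Your proposal is correct and follows essentially the same strategy as the paper's (much terser) proof, which simply invokes minimality of $|S\cap B|$ and says the doubled-arc case is handled ``in a fashion similar to the previous claim.'' You have fleshed out exactly that plan: take an innermost inessential curve, use incompressibility of $\wh{S}$ to cap it off inside $\wh{S}$, use irreducibility of $\wh{M}$ to get a ball, and clean up intersections with $G$ by innermost-disk/outermost-arc arguments so the resulting isotopy descends to $M\rmv n(\Gamma)$ and reduces $|S\cap B|$; the appeal to $\partial$-incompressibility of $S$ in your case~(b), where $c$ comes from a doubled arc, is the correct ingredient and is available from the hypotheses of Theorem~\ref{theorem:TopologyBoundsDistanceGraphComplement}.
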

\begin{proof}
Curves of intersection in $\wh{S}\cap \wh{B}$ which are inessential in both surfaces would either give rise to a reduction in $|S\cap B|$ or could have come from the doubling of arcs in $S\cap B$ which would  give rise to a reduction in $|S\cap B|$ in a fashion similar to the previous claim.  
\end{proof}

\begin{claim}
\label{claim:NoBoundaryParallelAnnuli}
There are no $\partial$-parallel annular components of $\wh{S}\cap \wh{W}$ or $\wh{S}\cap \wh{V}$.
\end{claim}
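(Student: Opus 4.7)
The plan is to argue by contradiction, paralleling the structure of Claims \ref{claim:Incompressible} and \ref{claim:Essential}. Suppose some annular component $A$ of $\wh{S}\cap \wh{W}$ is $\partial$-parallel in $\wh{W}$; the $\wh{V}$ case is symmetric. Since $\wh{W}$ is a handlebody with $\bd\wh{W}=\wh{B}$, there is an annulus $A'\subset \wh{B}$ with $\bd A'=\bd A$ such that $A\cup A'$ cobounds a solid region $R\cong A\times I$ in $\wh{W}$. Over all such triples $(A, A', R)$, I would choose one minimizing $|R\cap G|$. The goal is to use $R$ to produce an isotopy of $S$ inside $M\rmv n(\Gamma)$ that strictly decreases $|S\cap B|$, contradicting the minimality of $|S\cap B|$ assumed at the outset of the proof.

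The key technical step is to control the intersection surface $R\cap G$, which is a properly embedded subsurface of $R$ with boundary circles in $(A\cap G)\cup(A'\cap G)$. Since $G$ is incompressible in $\wh{M}$ and $\wh{M}$ is irreducible, an innermost-disk argument identical in spirit to the one in Claim \ref{claim:Incompressible} eliminates disk components of $R\cap G$: any such disk, together with a disk in $G$ supplied by incompressibility, cobounds a ball across which $R$ may be isotoped to reduce $|R\cap G|$, contradicting minimality. Higher-genus and annular components of $R\cap G$ that are compressible or $\partial$-parallel in $G$ are handled by analogous innermost/outermost reductions, so at the end of this step every component of $R\cap G$ is essential in both $R$ and $G$.

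After these reductions I would exploit the product structure $R\cong A\times I$: a transverse essential subsurface of $R$ must separate $R$ into pieces, and an innermost such piece provides a sub-region $R_0\subset R$ contained entirely in one copy $M_i$ of $M\rmv n(\Gamma)$ that realizes a parallelism between a sub-annulus of $A$ and a sub-annulus of $\wh{B}$. Through the natural inclusion $M_i\hookrightarrow \wh{M}$, this local parallelism descends to an isotopy of $S$ in $M\rmv n(\Gamma)$ that strictly decreases $|S\cap B|$, yielding the desired contradiction. The main obstacle is the detailed case analysis of how the components of $R\cap G$ can be configured inside the product $R\cong A\times I$ and the bookkeeping required to extract such a sub-region $R_0$, in much the same spirit as the innermost-loop and outermost-arc analysis carried out for $D\cap G$ within the proof of Claim \ref{claim:Incompressible}.
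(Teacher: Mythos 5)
Your approach is genuinely different from the paper's, and it contains a gap that I don't think can be patched without essentially abandoning the scheme.

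The paper's argument never works with the parallelism region $R$ at all. It looks directly at the annulus $A$ and splits on whether $A$ meets $G$: if $A\cap G=\emptyset$, then $A$ lies entirely in one copy $M_i$, so it is an annular component of $S\cap W$ that could be pushed across $B$, contradicting the minimality of $|S\cap B|$; if $A\cap G\neq\emptyset$, then because $\wh{S}$ is the double of $S$, the curves $A\cap G$ cut $A$ into pieces each of which is a $\partial$-parallel \emph{disk} of $S\cap W$ in $W\rmv n(\Gamma)$, and again minimality would have removed it. The whole content is two sentences because it exploits the symmetry of $\wh{S}$ across $G$.

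Your plan instead minimizes $|R\cap G|$ and tries to extract an innermost sub-region $R_0\subset R$ living in a single $M_i$ that "realizes a parallelism between a sub-annulus of $A$ and a sub-annulus of $\wh{B}$." Two things go wrong. First, when you clean up $R\cap G$ you cannot freely remove inessential pieces by isotopy: $A$ is a fixed component of $\wh{S}\cap\wh{W}$, so any isotopy of $R$ that changes $R\cap A$ or $R\cap\wh{B}$ amounts to re-choosing $A'$ and $R$, and you have to check such a re-choice is still a product region with the same boundary annulus $A$, which is not automatic. Second, and more seriously, once $R\cap G$ consists of essential surfaces in the solid torus $R\cong A\times I$, an innermost complementary piece $R_0$ need not have boundary meeting both $A$ and $\wh{B}$: its frontier can consist of a sub-annulus of $A$ (or $A'$) together with an annulus of $G$, with no $\wh{B}$-face at all, or it can be cobounded entirely by $G$-annuli and a piece of $A'$. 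In those cases $R_0$ does not encode any parallelism between $S$ and $B$, and pushing $S$ across $R_0$ only slides $\partial S$ along $\partial n(\Gamma)$ without reducing $|S\cap B|$. To salvage your argument you would have to rule out these configurations, and I don't see how to do that without effectively reverting to the paper's direct analysis of $A\cap G$ and the resulting pieces of $A$ in $M_1$.
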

\begin{proof}
Any such component disjoint from $G$ would have been eliminated when   $|S\cap B|$ was minimized.  The intersection of any such component intersecting $G$ with $M_1$ would be a $\partial$-parallel disk which also would have been eliminated when $|S\cap B|$ was minimized.
\end{proof}

Now we have satisfied all the hypotheses to obtain the sequence of isotopic copies of $\wh{S}$ described in Lemmas 4.4 and 4.5 of \cite{HarHSHMHB}.  Depending on whether either of  $\wh{S}\cap \wh{V}$ or $\wh{S}\cap \wh{W}$ contain disk components or not, we apply either Lemma 4.4 or 4.5, respectively, of \cite{HarHSHMHB} to obtain a sequence of compressions of $\wh{S}$ which give rise to a path in  $\mathcal{AC}(\wh{S})$.  A priori, this path would not restrict to a path in $\mathcal{AC}(S)$, but the following Claim shows that we can choose the compressions to be symmetric across  $G$, and so each compression will correspond to an edge in $\mathcal{AC}(S)$.  

\begin{claim}
If there exists an elementary $\partial$-compression of $\wh{S}$ in $\wh{V}$ (resp. $\wh{W}$), then there exists an elementary compression of $\wh{S}$ in $\wh{V}$ (resp. $\wh{W}$) which is symmetric across $G$ in the sense that either
\begin{enumerate}
\item the $\partial$-compressing disk $D_1$  is disjoint from $G$ in $M_1$, and there is a corresponding $\partial$-compressing disk $D_2$ in $M_2$,
or
\item the $\partial$-compression is along a disk that is symmetric across $G$.  
\end{enumerate}

\end{claim}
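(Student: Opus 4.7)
The plan is to take any elementary $\partial$-compressing disk $D\subset\wh{V}$ for $\wh{S}\cap\wh{V}$ and exploit the canonical involution $\tau\colon\wh{M}\to\wh{M}$ (which fixes $G$ pointwise and exchanges $M_1\leftrightarrow M_2$) to construct a $\tau$-symmetric analogue of $D$. Among all such disks, choose $D$ to minimize $|D\cap G|$. Since $G$ is incompressible in $\wh{M}$ and $\wh{M}$ is irreducible, the usual innermost-disk argument removes closed curves of $D\cap G$, so in minimal position $D\cap G$ is a (possibly empty) collection of arcs. If $D\cap G=\emptyset$ then $D$ lies in some $V_i$; setting $D_1=D$ and $D_2=\tau(D)$ produces case (1).

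Otherwise, pick an arc $\alpha$ of $D\cap G$ outermost in $D$, cutting off a subdisk $D'\subset V_i$ (say $i=1$) with $\partial D'=\alpha\cup\beta$, $\beta\subset\partial D$. Writing $\partial D=\partial_S D\cup\partial_B D$ for the arcs on $\wh{S}$ and on $\wh{B}$, there are three possibilities for $\beta$: it lies entirely in $\partial_S D$, entirely in $\partial_B D$, or it crosses one of the two corners where $\partial_S D$ meets $\partial_B D$. In the first possibility $D'$ is a $\partial$-compressing disk for $S_1$ in $M_1$ with boundary arc on $G\subset\partial M_1$; since $S$ is $\partial$-incompressible, $\beta$ is parallel through a disk $E\subset S_1$ to an arc $\alpha'\subset\partial S_1\subset G$. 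Then $D'\cup E$ has boundary $\alpha\cup\alpha'\subset G$, which bounds a disk $F\subset G$ by incompressibility of $G$, and irreducibility of $\wh{M}$ makes $D'\cup E\cup F$ bound a ball across which $D$ can be isotoped to reduce $|D\cap G|$, contradicting minimality. The second possibility is handled by a parallel argument using the handlebody structure of $V_1=V\rmv n(\Gamma\cap V)$: either the relevant arc on $\partial_B D$ is parallel in $\partial V_1$ across a disk that permits a reducing isotopy, or $D'$ can be replaced by a disk disjoint from $G$.

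Hence only the third possibility can occur at an outermost arc. For such an $\alpha$, write $\beta=\beta_S\cup\beta_B$ sharing the unique corner contained in $\beta$, and set $\wt{D}=D'\cup\tau(D')$, glued along $\alpha\subset G$. Since $\tau$ fixes $\alpha$ while $D'$ and $\tau(D')$ lie in opposite copies of $M\rmv n(\Gamma)$, the union is an embedded disk with $\tau(\wt{D})=\wt{D}$, and its boundary decomposes as $(\beta_S\cup\tau(\beta_S))\cup(\beta_B\cup\tau(\beta_B))$, an arc in $\wh{S}$ meeting an arc in $\wh{B}$ at the two reflected corners. Minimality of $|D\cap G|$ together with essentiality of $\partial_S D$ in $\wh{S}\cap\wh{V}$ force the new $\wh{S}$-arc to be essential as well, so $\wt{D}$ is an essential $\partial$-compressing disk symmetric across $G$, giving case (2).

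The main obstacle is the second possibility in the outermost-arc analysis: unlike $\partial$-incompressibility of $S$, no direct incompressibility hypothesis on $\wh{B}$ is available, and one must extract a reduction from the combinatorics of the handlebody $V_1$ together with the minimality of $|D\cap G|$. A secondary technicality is verifying essentiality of $\wt{D}$ in case (2), since an inessential $\wt{D}$ would in turn permit a simplification of $D$ reducing $|D\cap G|$ and violating minimality.
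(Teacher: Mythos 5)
Your high-level strategy matches the paper's: minimize $|D\cap G|$, remove loops by incompressibility of $G$ and irreducibility of $\wh{M}$, classify the remaining arcs of $D\cap G$, kill off the two ``bad'' arc types, and in the surviving case glue a subdisk to its mirror image across $G$ to obtain a symmetric $\partial$-compressing disk. However, there is a genuine gap exactly where you flag an ``obstacle,'' and it is not a minor technicality.

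The problem is the $\wh{B}$-arc case (your second possibility). You assert that the reduction comes ``from the combinatorics of the handlebody $V_1$ together with minimality,'' but you do not produce the reduction, and the vague alternatives you offer (``the relevant arc on $\partial_B D$ is parallel in $\partial V_1$'' or ``$D'$ can be replaced by a disk disjoint from $G$'') are not justified and in fact point in the wrong direction: it is not $\beta\subset\wh{B}$ whose triviality one should exploit, but the arc $\alpha\subset G$. The paper's argument uses the bridge-position structure in an essential way: an arc of $D\cap G$ with both endpoints on $\wh{B}$ lies in $G\cap\wh{V}$, and such an arc is either inessential in $G\cap\wh{V}$ (hence bounds a disk there) or is essential, in which case it bounds a \emph{graph-bridge disk} in $V_1$. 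In either case the corresponding disk can be glued to a suitable subdisk of $D$ to produce a new elementary $\partial$-compressing disk meeting $G$ fewer times, a contradiction. Without invoking bridge position (which is precisely where the graph-bridge disks come from), your handlebody-only argument does not close. Also note the paper does this elimination using a subdisk cut off by \emph{all} arcs of $D\cap G$ with at most one vertical arc on its boundary, rather than a single outermost arc, which is needed to handle the situation where several $\wh{B}$-arcs are nested; your single-outermost-arc framing, combined with your three-case breakdown of $\beta$, also silently omits the possibility that $\beta$ contains both corners of $\partial D$.

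A secondary, smaller concern: in the vertical-arc endgame you assert essentiality of the new $\wh{S}$-arc $\beta_S\cup\tau(\beta_S)$ from ``minimality plus essentiality of $\partial_S D$,'' but the paper needs the stronger notion of \emph{strongly essential} (ruling out the meridian of a boundary-parallel annulus) and derives it via an explicit reduction argument, not by a one-line appeal. As written, your essentiality claim is an assertion rather than a proof.
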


\begin{proof}
Let $D$ be an elementary $\partial$-compression disk for, say, $\wh{S}\cap\wh{V}$ chosen to minimize $|D\cap G|$.  We may restrict attention to such disks with $|D\cap G|>0$.  

First, we observe that $D\cap G$ cannot contain any loops of intersection, for a loop of $D\cap G$ innermost in $D$ bounds a sub-disk of $D$ which would either give rise to a compression for $G$ or would provide a means of isotoping $D$ so as to lower $|D\cap G|$.  Thus, $D\cap G$ consists only of arcs.   These arcs are either 
\begin{itemize}
\item vertical arcs: with one endpoint on each of $\wh{S}$ and $\wh{B}$,
\item $\wh{S}$-arcs: with both endpoints on $\wh{S}$, or
\item $\wh{B}$-arcs: with both endpoints on $\wh{B}$.
\end{itemize}

Consider an $\wh{S}$-arc of $D\cap G$, outermost in $D$, cutting off sub-disk $D'$ from $D$, with boundary consisting of $\sigma$ in $\wh{S}$ and $\gamma$ in $G$. Without loss of generality, assume $D' \subset M_1$. If $\sigma$ is essential in $\wh{S}\cap M_1$, then $D'$ is a boundary compression disk for $S$ in $M$, which is impossible.  If $\sigma$ is inessential in $\wh{S}\cap M_1$, then it must co-bound a disk $E$ in $\wh{S}\cap M_1$ together with an arc $\sigma ' \subseteq \partial (\wh{S}\cap M_1)$.  The curve $\gamma \cup \sigma'$ cannot be essential in $G$, else $D'\cup E$ would be a compressing disk for $G$.  Thus, $\gamma \cup \sigma'$ bounds a disk, $F\subseteq G$.   Now $F\cup D'\cup E$ is a sphere bounding a ball in $M_1$, so $D\cup E$ is isotopic to $F$, and replacing $D'$ with $F$ results in an elementary boundary compressing disk for $\wh{S}\cap V$ with fewer  intersections with $G$ than $D$.  Thus we may assume that $D\cap G$ contains no $\wh{S}$-arcs.  

Now consider a sub-disk $D'$ of $D$ which is cut off by all the arcs of $D\cap G$ and whose boundary consists of no more than one vertical arc.  With out loss of generality, assume $D'\subseteq M_1$.  Suppose $\partial D'$ has $\wh{B}$-arcs, $\beta_1, \beta_2, \dots, \beta_k$.  Then all the $\beta_i$ are disjoint arcs on $G$.  If any of them are inessential in  $G\cap \wh{V}$ then they bound disks $B_i\subseteq G\cap V_1$.  If any of the $\beta_i$ are essential in $G\cap \wh{V}$, then they bound disks $B_i\subseteq V_1$ that are bridges disks for $n(\Gamma)$ in $V_1$.  In either case, $D' \cup \left( \bigcup_{i=1}^k B_i \right)$ results in a boundary compressing disk for $S\cap \wh{V}$ with fewer intersections with $G$ than $D$.  This boundary compressing disk is still elementary as the arc in $\wh{S}$ remains unchanged.  Thus, we may assume that $D\cap G$ consists solely of vertical arcs.  

Let $\gamma$ be an arc of $D\cap G$ outermost in $D$, cutting off a sub-disk $D_1$ from $D$.  Without loss of generality, $D_1\subseteq M_1$.  The boundary of $ D_1$ consists of three arcs; $\gamma \subseteq G$, $\sigma_1\subseteq S_1$ and $\beta_1\subseteq B_1$.  By symmetry, there exists disk $D_2\subseteq M_2$ in $M_2$, so that $D_1\cup D_2$ is a disk in $\wh{V}$ with boundary consisting of arcs $\sigma=\sigma_1\cup \sigma_2\subseteq \wh{S}$ and  $\beta=\beta_1\cup \beta_2\subseteq \wh{B}$, intersecting $G$ in exactly one arc, $\gamma$.  Finally, we must show that $\sigma$ is a ``strongly essential" arc in $\wh{S}\cap \wh{V}$.

If $\sigma$ is not strongly essential then  it is either the meridian of a boundary parallel annulus of $\wh{S}\cap \wh{V}$ which is 

not possible since $\sigma_1$ was a sub-arc of the original elementary compression disk $D$, or $\sigma$ is inessential in $\wh{S}\cap \wh{V}$.  If $\sigma$ is inessential  then it would co-bound a disk $E$ in $\wh{S}$ together with an arc $\sigma'\subseteq \wh{S}\cap\wh{B}$.  This disk provides an isotopy in $\wh{S}$ of  $\sigma_1$ to $\sigma_2$.  

If the disk $D'=D\rmv D_1$ only intersects $D_2$ in $\gamma$ then $D'\cup D_2$ is a  compressing disk for $\wh{S}\cap \wh{V}$ with fewer arcs of intersection with $G$, as the disk can be isotoped away from $\gamma$.  This disk is still an elementary compressing disk because $\sigma_1$ is isotopic to $\sigma_2$, and so contradicts our original choice of $D$.

Thus, $\sigma$ is strongly essential in $\wh{S}\cap \wh{V}$, and $D_1 \cup D_2$ is a new compressing disk for $\wh{S} \cap \wh{V}$ that is symmetric across $G$.
\end{proof}

We may, thus, proceed exactly as in Theorem \ref{theorem:Hartshorn}. Each elementary boundary compression of $\wh{S}$ towards either of $\wh{V}$ or $\wh{W}$ can be performed in a symmetric way, demonstrating a path from $\mathcal{D}_{\wh{V}}$ to $\mathcal{D}_{\wh{W}}$ in $\mathcal{C}(\wh{S})$ of length no greater than twice the genus of $\wh{S}$, which is $2(g(S) + |\bd S| - 1)$.

Each time a boundary compression for $\wh{S}$ corresponds to a pair of curves $\wh{c_i}$ and $\wh{c_{i+1}}$ in $S_1$ that contribute an edge in a path in $\mathcal{C}(\wh{S})$ from $\mathcal{D}_{\wh{V}}$ to $\mathcal{D}_{\wh{W}}$, there is immediately a pair of curves $\wh{c_{i+2}}$ and $\wh{c_{i+3}}$ in $S_2$ also contributing an edge in a path from $\mathcal{D}_V$ to $\mathcal{D}_W$, and this pair of paths corresponds to a single pair of curves $c_i$ and $c_{i+1}$ in $S$ contributing a single edge in $\mathcal{AC}(S)$. Each time a boundary compression for $\wh{S}$ corresponds to a pair of curves intersecting $G$ that contributes an edge in a path in $\mathcal{C}(\wh{S})$ from $\mathcal{D}_{\wh{V}}$ to $\mathcal{D}_{\wh{W}}$, the restriction of these curves to $S_1$ is a pair of arcs contributing an edge in $\mathcal{AC}(S)$. 

Further, since the boundary compressions (and elimination of boundary parallel annuli) are all being performed symmetrically, the resulting disks $D_{\wh{V}} \in \mathcal{D}_{\wh{V}}$ from $\wh{S} \cap \wh{V}$ and $D_{\wh{W}} \in \mathcal{D}_{\wh{W}}$ from $\wh{S} \cap \wh{W}$ are symmetric. That is, either $D_{\wh{V}}$ (resp., $D_{\wh{W}}$) is disjoint from $G$, so that we may assume that it sits in $V_1$ (resp., $W_1$), or it is symmetric across $G$ so that $D_{\wh{V}} \cap M_1$ (resp., $D_{\wh{W}} \cap M_1)$ is a graph bridge disk for $\Gamma$ in $M$. In either case, this demonstrates a path in $\mathcal{AC}(S)$ from $\mathcal{DG}_V$ to $\mathcal{DG}_W$ of length no greater than $2(g(S) + |\bd S| - 1)$.
\end{proof}

\section{Theorem \ref{theorem:MainTheorem}}
\label{section:MainTheorem}

In \cite{BacTITS3M} Bachman defined the topological index of a surface. In contrast to the distances between sub-complexes each corresponding to some disks discussed in Section \ref{section:Definitions}, he exploits the homotopy type of the complex of all disks.

\begin{definition} The surface $B$ is said to be \emph{topologically minimal} if either $\mathcal{D}_B$ is empty, or if there exists an $n \in \mathbb{N}$ so that $\pi_n(\mathcal{D}_B) \neq 0$. If a surface $B$ is topologically minimal, then the \emph{topological index} is defined to be the smallest $n \in \mathbb{N}$ so that $\pi_{n-1}(\mathcal{D}_B) \neq 0$, or $0$ if $\mathcal{D}_B$ is empty. 
\end{definition}

In \cite{BacJohOEHITMS} Johnson and Bachman showed that surfaces of arbitrarily high index exist, but the manifolds they construct all contain essential tori. We prove an analogue  of this.

\TheoremMainTheorem*

\subsection{The construction}
Let $n$ be a positive integer. We will construct a hyperbolic manifold containing a Heegaard surface of topological index $n$.

Using the machinery in Theorem \ref{theorem:ConstructingHighDistanceLinks}, let $\mathcal{L}$ be a $(0, 4)$-link in $S^3$ with two components, $L$ and $K$, with bridge sphere $B$ of distance at least $32n + 7$. Let $V$ and $W$ be the two 3-balls bounded by $B$. Since $\mathcal{L}$ is in bridge position, there exist disks $D_V$ and $D_W$ properly embedded in $V$ and $W$, respectively, with $(\mathcal{L} \cap V ) \subset D_V$, and $(\mathcal{L} \cap W ) \subset D_W$. By modifying $D_V$ if necessary, we can find two arcs $\tau_L$ and $\tau_K$ in $B$ such that

\begin{enumerate}
\item $\tau_L \cup \tau_K \subset D_V$,
\item $\tau_L \cap \tau_K = \emptyset$,
\item $\tau_L \cap \mathcal{L} = \bd \tau_L \subset L$ and $\tau_K \cap \mathcal{L} = \bd \tau_K \subset K$, 
\item each of $\tau_K$ and $\tau_L$ have endpoints on different components of $\mathcal{L} \cap V$.
\end{enumerate}
 Let $L' = L \cup \tau_L$, let $G_{L} = \bd n(L')$, let $K' = K \cup \tau_K$, let $G_{K} = \bd n(K')$, and let $\Gamma=\mathcal{L} \cup \tau_L \cup \tau_K = L' \cup K'$. Observe that $\Gamma$ is a graph in bridge position with respect to $B$. Let $M' = \overline{S^3 \rmv n(\Gamma)}$, let $V'=\overline{V\rmv n(\Gamma)}$, and let $W' = \overline{W \rmv n(\Gamma)} = \overline{W \rmv n(\mathcal{L})}$, and $B' = B \rmv n(\Gamma)=B \rmv n(\mathcal{L})$.

For each $i = 1, 2, \dots, n$, let $M'_i$ be homeomorphic to $M'$, along with homeomorphic copies $\mathcal{L}_i$ of $\mathcal{L}$, $(G_{L})_i$ of $G_{L}$, $(G_{K})_i$ of $G_{K}$, and $B'_i$ of $B'$.

Then, for each $i = 1, 2, \dots, (n-1)$, identify $(G_{K})_i$ with $(G_{L})_{i+1}$ and identify $(G_{K})_n$ with $(G_{L})_1$, all via the same homeomorphism. Call the resulting closed $3$-manifold $M^n$. Observe that the union of the $B'_i$ is a closed surface that we will call $B^n$. We will show that $B^n$ is a Heegaard surface for $M^n$, that $B^n$ has high topological index, and that $M^n$ is hyperbolic.

\begin{proposition}
\label{proposition:HeegaardSurface}
For each $n$, the surface $B^n\subset M^n$ is a genus $3n + 1 $ Heegaard surface.  
\end{proposition}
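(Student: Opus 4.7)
The plan is to show that $B^n$ separates $M^n$ into two handlebodies of genus $3n+1$. I would proceed by computing the genus via Euler characteristic, identifying the two complementary regions, and then applying Stallings' theorem after computing the fundamental groups.

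First, each $B'_i$ is an 8-punctured sphere (since the two-component 4-bridge link $\mathcal{L}$ meets $B$ in $8$ points), so $\chi(B'_i) = -6$. The gluing homeomorphism $\phi \colon G_K \to G_L$ must carry the 4-holed sphere subsurface $G_K \cap V$ to the 4-holed sphere $G_L \cap V$ (rather than to the pair of annuli $G_L \cap W$), since these subsurfaces have distinct topological type on the genus-2 surface. Hence the cyclic identifications glue the boundary circles of the $B'_i$ in pairs along simple closed curves, preserving Euler characteristic: $\chi(B^n) = -6n$, yielding genus $3n+1$ for this closed orientable surface. Because $\phi$ preserves the $V/W$ decomposition, $B^n$ separates $M^n$ into $V^n := \bigcup_i V'_i$ and $W^n := \bigcup_i W'_i$.

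To show $V^n$ is a handlebody I would apply Stallings' theorem; since $\partial V^n = B^n$ has genus $3n+1$, it suffices to show $\pi_1(V^n)$ is free of rank $3n+1$. Each $V'_i = V_i \setminus n(\Gamma_i \cap V_i)$ is the ball complement of two disjoint trees (the restrictions of $L'_i$ and $K'_i$, each with $5$ edges and $2$ internal vertices), so $V'_i$ is a genus-$6$ handlebody with $\pi_1(V'_i) = F_6 = F_3^L \ast F_3^K$; the two $F_3$ factors are the fundamental groups of the 4-holed spheres $G_L \cap V_i$ and $G_K \cap V_i$, embedded as free factors via the boundary-connect-sum decomposition of $V'_i$ into two genus-$3$ handlebodies (obtained by separating $n(L'_i \cap V_i)$ from $n(K'_i \cap V_i)$ by a disk). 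The graph-of-groups presentation of $\pi_1(V^n)$ on the cyclic graph with $n$ vertices and $n$ edges then reduces as follows: the $n-1$ tree amalgamations over $F_3$ yield $F_{3(n+1)}$; the single remaining HNN extension, over an $F_3$ edge group embedded as a free factor on both ends, absorbs the three generators of one $F_3$-factor in favor of the stable letter $t$ and the generators of the other, yielding $\pi_1(V^n) = F_{3n+1}$. An analogous argument for $W^n$ uses $\pi_1(W'_i) = F_4 = F_2^L \ast F_2^K$ (with each $F_2$ generated by two meridians), gluings along pairs of annuli with $\mathbb{Z}$ edge groups (each being a meridian, hence a primitive free factor generator); the graph-of-groups presentation has $n-1$ tree amalgamations (each reducing rank by $1$) and $n+1$ HNN extensions over primitive $\mathbb{Z}$-subgroups (each preserving rank), yielding $\pi_1(W^n) = F_{3n+1}$.

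The main obstacle will be verifying that the edge-group embeddings into the vertex groups are as described: on the $V$-side, showing that the standard generating loops of $\pi_1(G_L \cap V_i)$ or $\pi_1(G_K \cap V_i)$ form a free factor of $\pi_1(V'_i)$; on the $W$-side, showing that each annulus core is a primitive generator of $\pi_1(W'_i)$. Both facts should follow from the bridge position of $\Gamma$ and the planar structure of $L'_i \cap V_i$ and $K'_i \cap V_i$ within the bridge disk $D_V$.
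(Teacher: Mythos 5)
Your approach differs genuinely from the paper's. The paper's proof is direct and combinatorial: it cuts $V' = V \smallsetminus n(\Gamma)$ and $W' = W \smallsetminus n(\Gamma)$ along the subdisks that the bridge disks $D_V$ and $D_W$ decompose into after removing $n(\Gamma)$, obtaining collections of $3$-balls, and then observes that the gluings $(G_K)_i \leftrightarrow (G_L)_{i+1}$ merge these subdisks into properly embedded disks in $V^n$ and $W^n$ whose complement is still a collection of balls. No fundamental-group machinery is used. Your route, via graph-of-groups presentations of $\pi_1(V^n)$ and $\pi_1(W^n)$ and an appeal to the characterization of handlebodies as irreducible $3$-manifolds with free fundamental group and nonempty boundary, is a legitimate alternative, and the arithmetic in your Bass--Serre computation is right: $n$ vertex groups $F_6$ with $n$ rank-$3$ edge groups on a cycle gives $F_{3n+1}$ on the $V$-side, and $n$ vertex groups $F_4$ with $2n$ rank-$1$ edge groups (two annuli per gluing) also gives $F_{3n+1}$ on the $W$-side.

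However, as written there are two real gaps. First, the Stallings-type theorem you invoke requires irreducibility of $V^n$ and $W^n$ (otherwise $S^2\times S^1$ summands contribute free factors without the manifold being a handlebody), and you never verify this. It would follow from $\pi_1$-injectivity of the gluing subsurfaces into the adjacent handlebody pieces, but that needs to be said. Second, and more centrally, you defer exactly the load-bearing claim — that each edge group sits as a free factor in its vertex groups, with the two images of each non-tree edge group landing in \emph{independent} free factors so that the HNN extensions stay free — to ``the main obstacle,'' asserting it ``should follow'' from bridge position without an argument. This is provable (the key fact is that $V\smallsetminus n(L'\cap V)$ and $V\smallsetminus n(K'\cap V)$ are each products $(\text{four-holed sphere})\times I$ because the trees lie in the bridge disk $D_V$, so $V'$ is a boundary connect sum of two such products and the inclusion of each $G_\bullet\cap V$ induces an isomorphism onto a visible free factor; an analogous argument handles the meridional annuli in $W'$). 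But until those facts are established, the graph-of-groups presentation itself is not justified, so the proof is incomplete. The paper's cutting-disk argument sidesteps both issues entirely, which is what it buys at the cost of some bookkeeping about the subdisks.
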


\begin{proof}
That the genus of $B^n$ is $3n + 1$ can be verified by an Euler characteristic count. It suffices, then, to verify that the complement of $B^n$ is two handlebodies, $V^n$ and $W^n$. 

Since $\Gamma$ was in bridge position with respect to $B$,  there are disks $D_V$ and $D_W$ properly embedded in $V$ and $W$, respectively, so that $\Gamma \cap V \subset D_V$ and $\Gamma \cap W \subset D_W$. Then $D_V$ and $D_W$ cut along $\Gamma$ is a collection of sub-disks. 

The result of cutting $V \rmv n(\Gamma)$ along all these sub-disks of $D_V$ is a pair of 3-balls, each with two sub-disks, $D_1^+$ and $D_2^+$, of $n(\Gamma)$ contained in the boundary. Each identification of $(G_{K})_i$ with $(G_{L})_{i+1}$ (indices mod $n$) glues pairs of these sub-disks along arcs, resulting in disks in $V^n$, and further cutting along $(n-1)$ copies of each of $D_1^+$ and $D_2^+$ results in a collection of $3$-balls, showing that $V^n$ is a handlebody.

Similarly, the result of cutting $W \rmv n(\Gamma)$ along all of the sub-disks of $D_W$ is a pair of 3-balls, each with four sub-disks of $n(\Gamma)$ contained in the boundary, $D_1^-, D_2^-, D_3^-$, and $D_4^-$. Each identification of $(G_{K})_i$ with $(G_{L})_{i+1}$ (indices mod $n$) glues pairs of these sub-disks along arcs, resulting in disks in $W^n$, and further cutting along $(n-1)$ copies of each of $D_1^-, D_2^-, D_3^-$, and $D_4^-$ results in a collection of $3$-balls, showing that $W^n$ is a handlebody.
\end{proof}

\subsection{Bounding from above}

\begin{proposition}
The surface $S^n$ has topological index at most $n$.
\end{proposition}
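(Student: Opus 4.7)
The plan is to exhibit an explicit $(n-1)$-sphere in the disk complex $\mathcal{D}_{B^n}$ and show that it represents a non-trivial class in $\pi_{n-1}(\mathcal{D}_{B^n})$; by definition of topological index, this forces the index of $B^n$ to be at most $n$.

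I begin by constructing the sphere. In each layer $M'_i \subset M^n$, I will locate a pair of compressing disks for $B^n$: one $a_i \subset V^n$ obtained from a graph-bridge disk for the added arc $\tau_L$ or $\tau_K$ in $V'_i$, and one $b_i \subset W^n$ obtained from a bridge disk in $W'_i$ for one of the link arcs of $\mathcal{L}$. Appropriate choices, together with the identifications $(G_K)_j \sim (G_L)_{j+1}$, cause the boundaries of these disks to close up into simple closed curves on $B^n$. Because the layers $M'_i$ and $M'_j$ are disjoint for $i \neq j$, one can arrange $a_i, a_j, b_i, b_j$ to be pairwise disjoint. Within each layer, however, $\partial a_i$ and $\partial b_i$ must intersect on $B'_i$: otherwise we would have disjoint compressing disks on opposite sides of $B^n$, forcing the distance to drop to $1$ and producing (via Theorem~\ref{theorem:TopologyBoundsDistanceGraphComplement}) an essential surface contradicting $d_{\mathcal{C}}(B, \mathcal{L}) \geq 32n+7$. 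These properties realize the $n$-fold join $\{a_1, b_1\} * \cdots * \{a_n, b_n\} \cong S^{n-1}$ as a subcomplex $\Sigma \subset \mathcal{D}_{B^n}$.

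To show $[\Sigma] \neq 0$ in $\pi_{n-1}(\mathcal{D}_{B^n})$, I argue by contradiction: a null-homotopy $\Phi : D^n \to \mathcal{D}_{B^n}$ extending the inclusion of $\Sigma$ would, via the iterated-compression and sweepout technique of Bachman--Johnson \cite{BacJohOEHITMS}, yield an essential, properly embedded surface in $M^n$ that descends under the deck action to an essential surface $S'$ in $M' \rmv n(\Gamma)$ with complexity $2g(S') + |\partial S'| - 1$ linear in $n$ with a small constant. Applying Theorem~\ref{theorem:TopologyBoundsDistanceGraphComplement} then bounds $d_{\mathcal{G}}(B, \Gamma)$, and chaining with Corollary~\ref{cor:ACtoG} and Lemma~\ref{lemma:DifferenceDistanceCurveComplexArcCurveComplex} yields an upper bound on $d_{\mathcal{C}}(B, \mathcal{L})$ of order $n$ with leading coefficient strictly less than $32$, contradicting $d_{\mathcal{C}}(B, \mathcal{L}) \geq 32n + 7$.

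The main obstacle is extracting the essential surface $S'$ with the required linear complexity bound from $\Phi$. This requires running Bachman's sweepout argument in the cover $M^n$ equivariantly with respect to the deck group, so that the resulting surface descends cleanly to a single fundamental domain $M'$, and keeping the complexity constants tight enough to actually beat the factor of $32$ built into the distance assumption. A secondary subtlety is the verification that $\partial a_i$ must meet $\partial b_i$ essentially inside each layer, which again ultimately rests on the high distance of the original bridge sphere $B$ for $\mathcal{L}$.
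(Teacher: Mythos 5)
The paper's proof takes a cleaner and genuinely different route: rather than showing directly that a particular sphere is non-null-homotopic, it exhibits the $(n-1)$-sphere $P$ as a \emph{simplicial retract} of $\mathcal{D}_{B^n}$. Once one has a retraction $F : \mathcal{D}_{B^n} \to P$ with $F|_P = \mathrm{id}$, the inclusion-induced map $\pi_{n-1}(P) \to \pi_{n-1}(\mathcal{D}_{B^n})$ is automatically injective, so $\pi_{n-1}(\mathcal{D}_{B^n}) \neq 0$ follows immediately, with no sweepout, no equivariance, and no tracking of complexity constants. Concretely, the paper takes the unique essential disk $D_i^+$ in $V'_i$ and a representative essential disk $D_i^-$ in $W'_i$, each lying entirely inside a single layer $M'_i$ with boundary in $B'_i$, lets $P$ be the span of these $2n$ vertices, and defines $F$ by sending an arbitrary disk $D$ to $D_j^{\pm}$ according to where an essential outermost sub-disk of $D$ cut along $\bigcup G_i$ lives. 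The only nontrivial point is that $F$ is simplicial, which is forced by Corollary~\ref{cor:ACtoG} together with the high distance of $B$: disks on opposite sides of the same $B'_j$ must intersect.

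Your proposal diverges from this on both halves, and both divergences introduce problems. First, your sphere is built from disks $a_i, b_i$ obtained by closing up bridge and graph-bridge disks across the identifications $(G_K)_j \sim (G_L)_{j+1}$; such a disk is not confined to a single layer, so the assertion that the $a_i$'s are pairwise disjoint ``because the layers are disjoint'' no longer follows for consecutive indices, which share a layer. The paper avoids this entirely by using single-layer compressing disks. Second, and more seriously, your non-triviality argument proceeds by contradiction from a hypothetical null-homotopy $\Phi$, invoking an equivariant sweepout to produce a low-complexity essential surface. You flag this yourself as the ``main obstacle'' and leave it unresolved; the constant-tracking you gesture at (a leading coefficient below $32$) is precisely what is hard, and nothing in the proposal establishes it. Moreover, that style of argument (extract a surface from a compression body structure and bound complexity) is the machinery of the \emph{lower}-bound direction, used together with Theorem~\ref{theorem:BachmanNiceIntersection} to show the index is at least $n$; importing it here makes the upper bound much harder than it needs to be. As written, the proposal is an incomplete sketch of a difficult argument that the retraction route renders unnecessary.
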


\begin{proof} Our proof will follow almost exactly as the proof of Proposition 5 from \cite{BacJohOEHITMS}. 
In each copy $M_i'$ of the manifold $M'$, we have the surface $B'_i$, a copy of $B'$, dividing the manifold into $V_i'$ and $W_i'$, copies of $V'$ and $W'$. Observe that in each $V'_i$, there is exactly one essential disk, $D_i^+$ with boundary contained in $B_i'$, just as in \cite{BacJohOEHITMS}. However, in each $W_i'$, there are several essential disks with boundary contained in $B_i'$. We will call this collection of disks $\mathscr{D}^-_i$.  From each $\mathscr{D}^-_i$, choose a single representative $D^-_i$.

Define the sub-complex, $P$, of $\mathcal{D}_M$ spanned by the vertices corresponding to $\bigcup_i \set{D_i^+, D_i^-}$, which is homeomorphic to an $(n-1)$-sphere. Then, define a map $F: \mathcal{D}_M \to P$ by the identity on $P$, and by sending a vertex corresponding to a disk $D \not \in \bigcup_i \set{D_i^+, D_i^-}$ to the vertex corresponding to $D_j^+$ or $D_j^-$, where either $D \in \mathscr{D}_j^-$, or $j$ is the smallest index for which an essential outermost sub-disk of $D \rmv (\bigcup_i G_i)$ is contained in $V'_j$ or $W'_j$, respectively. 

Just as in \cite{BacJohOEHITMS}, we claim that this map $F$ is a simplicial map that fixes each vertex of $P$. To see this, consider any two disks $D_1$ and $D_2$ connected by an edge in $\mathcal{D}_M$ (so that the disks are realized disjointly in $M$). Observe that by our construction of $M'$ and Corollary \ref{cor:ACtoG}, any disk contained in $V'_j$ must intersect any disk contained in $W'_j$ (whether either disk is a bridge disk, a graph-bridge disk, or the boundary is contained in $B'_j$). So, if $D_i^{\pm} = F(D_1) \neq F(D_2) = D_j^{\pm}$, then $i \neq j$, and $F(D_1)$ is joined to $F(D_2)$ in $P$. Thus, $F$ is a retraction onto the $(n-1)$-sphere, $P$, showing that $\pi_{n-1}(\mathcal{D}_M)$ is non-trivial, so the topological index of $B^n$ is at most $n$.  
\end{proof}

\begin{cor}
The topological index of $B^n$ is well-defined, and $B^n$ is topologically minimal.
\end{cor}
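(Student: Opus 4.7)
The plan is that essentially all of the substantive content has already been established by the preceding proposition. That proposition constructs a simplicial retraction $F : \mathcal{D}_{B^n} \to P$ onto the subcomplex $P \cong S^{n-1}$ spanned by the chosen disks $\bigcup_i \{D_i^+, D_i^-\}$. A standard observation in algebraic topology shows that the existence of such a retraction forces the inclusion $P \hookrightarrow \mathcal{D}_{B^n}$ to induce an injection $\pi_{n-1}(P) \hookrightarrow \pi_{n-1}(\mathcal{D}_{B^n})$ (the retraction provides a left inverse at the level of homotopy groups), and since $\pi_{n-1}(S^{n-1}) \neq 0$, I conclude $\pi_{n-1}(\mathcal{D}_{B^n}) \neq 0$.

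With this non-triviality in hand, both conclusions of the corollary will follow immediately from the definitions. First, by definition $B^n$ is topologically minimal provided there exists some positive integer $k$ with $\pi_k(\mathcal{D}_{B^n}) \neq 0$; taking $k = n-1$ gives exactly this. Second, the topological index is the smallest positive integer $k$ with $\pi_{k-1}(\mathcal{D}_{B^n}) \neq 0$, so well-definedness amounts to showing this set of integers is non-empty. But $k = n$ lies in that set, so by the well-ordering of $\mathbb{N}$ the minimum exists and the topological index is well-defined (and bounded above by $n$, matching the previous proposition).

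Since the argument is essentially an application of the relevant definitions to a fact already proven, there is no real obstacle here; the only point to double-check is that the notation $\mathcal{D}_M$ used in the preceding proposition indeed refers to $\mathcal{D}_{B^n}$, the disk complex of the Heegaard surface $B^n$ inside $M^n$, and that the retraction there is produced onto a genuine $(n-1)$-sphere rather than a contractible subcomplex. Both are clear from the construction, so the corollary is a one-line consequence of the preceding proposition together with the definitions of topologically minimal and of topological index.
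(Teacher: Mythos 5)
Your argument is correct and is exactly the (unstated) reasoning behind the paper's corollary: the paper gives no proof because, as you observe, the preceding proposition already exhibits a retraction onto an $(n-1)$-sphere, forcing $\pi_{n-1}(\mathcal{D}_{B^n}) \neq 0$, and then both conclusions are immediate from the definitions. The only point worth noting is that your phrase ``positive integer $k$'' in the topological-minimality step should be read against the paper's convention for $\mathbb{N}$ (for $n=1$ one needs $\pi_0 \neq 0$ to count, which it does under Bachman's usage), but in the regime $n > 1$ that the theorem ultimately concerns, your phrasing is unambiguous and the argument matches the paper's intent.
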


\subsection{Bounding from below}

We make use of an important theorem in the development of topological index by Bachman:

\begin{theorem}[Theorem 3.7 of \cite{BacTITS3M}]
\label{theorem:BachmanNiceIntersection}
Let $G$ be a properly embedded, incompressible surface in an irreducible 3-manifold $M$. Let $B$ be a properly embedded surface in $M$ with topological index $n$. Then $B$ may be isotoped so that 
\begin{enumerate}
\item $B$ meets $G$ in $p$ saddles, for some $p \leq n$, and
\item the sum of the topological indices of the components of $B \rmv n(G)$, plus $p$ is at most $n$.
\end{enumerate}
\end{theorem}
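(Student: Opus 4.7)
The plan is to adopt the Morse-theoretic framework that underlies Bachman's topological index theory. First I would isotope $B$ into general position with respect to $G$, so that $B \cap G$ is a finite collection of closed curves, and then, using that $G$ is incompressible and $M$ is irreducible, remove by standard innermost-disk arguments any component of $B \cap G$ that is inessential in either $B$ or $G$; after this cleanup every curve of $B \cap G$ is essential on both sides. Viewing the product neighborhood $n(G) = G \times [-1,1]$, I would then arrange $B \cap n(G)$ to be Morse with respect to the height function on the second factor. After a further isotopy $B \cap n(G)$ consists of a product part together with finitely many saddle tangencies at isolated heights; let $p$ denote the number of these saddles.

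The next step is to extract a disk-complex picture. Each saddle tangency produces, in a thin bicollar about $G$, a pair of compressing disks for $B$, one to each side of $G$; by the incompressibility of $G$ and the essentiality of $B \cap G$ all these disks are essential in $B$, and by construction they may be realized pairwise disjointly. Consequently the saddle disks span a simplex $\sigma$ of dimension at least $p-1$ in $\mathcal{D}_B$. Moreover, for each component $B_j$ of $B \rmv n(G)$, the compressing disks for $B_j$ lying in the appropriate component of $M \rmv n(G)$ embed into $\mathcal{D}_B$ as a subcomplex $\mathcal{D}_{B_j}$; after a careful thin-position isotopy minimizing $p$ one expects the natural inclusion $\sigma \ast \bigsqcup_j \mathcal{D}_{B_j} \hookrightarrow \mathcal{D}_B$ to be a homotopy equivalence, because every disk can be surgered along its essential intersections with $G$ into a combination of saddle disks and piece-disks that remain joined in $\mathcal{D}_B$.

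Finally I would exploit the hypothesis $\pi_{n-1}(\mathcal{D}_B) \neq 0$. The join of a $(p-1)$-simplex with a space $X$ is homotopy equivalent to the $p$-fold reduced suspension of $X$, so non-triviality of $\pi_{n-1}(\mathcal{D}_B)$ translates into non-triviality of $\pi_{n-1-p}$ of $\bigsqcup_j \mathcal{D}_{B_j}$, which in turn forces both $p \leq n$ and $p + \sum_j \mathrm{index}(B_j) \leq n$ by the definition of the topological index of each $B_j$. The main obstacle, and the technical heart of Bachman's original argument, is establishing the join-decomposition of $\mathcal{D}_B$: one must select the isotopy of $B$ so that no essential disk for $B$ crosses $G$ outside of the saddles, and so that the choice of disks on the two sides of each saddle is combinatorially coherent. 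Both requirements demand a delicate thin-position minimization of the complexity pair $(|B \cap G|, p)$, together with a careful verification that each ``surgery'' of a disk along a saddle actually produces disks that are disjoint from the others one has chosen, so that the join structure survives at the level of simplicial complexes and not merely of vertex sets.
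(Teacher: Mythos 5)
The paper you are reading does not prove this statement at all: it is Theorem~3.7 of Bachman's \emph{Topological index theory for surfaces in $3$-manifolds}, imported by citation and used as a black box, so there is no in-paper proof to compare your sketch against. Evaluated on its own terms, your outline follows the broad contour of Bachman's actual argument (put $B\cap n(G)$ in Morse position, extract disks from saddle tangencies, relate $\mathcal{D}_B$ to a join involving a saddle complex and the disk complexes of the pieces, then count via suspension), but it contains a genuine error at the crucial homotopy-theoretic step. You assert that the saddle disks span a \emph{simplex} $\sigma$ of dimension at least $p-1$ and then claim that ``the join of a $(p-1)$-simplex with a space $X$ is homotopy equivalent to the $p$-fold reduced suspension of $X$.'' That is false: a simplex is contractible, and the join of a contractible space with anything is contractible, so $\sigma * \bigsqcup_j \mathcal{D}_{B_j}$ as you have described it would be contractible, contradicting the hypothesis $\pi_{n-1}(\mathcal{D}_B)\neq 0$ rather than exploiting it. What the argument actually requires is the \emph{boundary} sphere $S^{p-1}$, realized as the join of $p$ copies of $S^0$ with each $S^0$ being the pair of disks arising from one saddle (one disk on each side of $G$); it is $S^{p-1}*X \simeq \Sigma^{p}X$ that drives the dimension count, not $\Delta^{p-1}*X$. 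The distinction is not cosmetic: one must show that in a nontrivial spherical cycle in $\mathcal{D}_B$ the two disks belonging to a single saddle cannot be used ``simultaneously,'' which is exactly what forces the sphere rather than the simplex and is where the incompressibility of $G$ and a genuine minimality argument enter.

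Beyond that, the step you flag as ``the technical heart'' -- that the inclusion of the join into $\mathcal{D}_B$ is a homotopy equivalence after a suitable thin-position isotopy -- is not justified, and as stated it is stronger than what Bachman proves or needs. His argument does not establish a global homotopy equivalence of disk complexes; rather, he homotopes a given essential map $S^{n-1}\to\mathcal{D}_B$ into a subcomplex with the join structure, using an induction on a complexity pair and a surgery argument to push disks off $G$ one saddle at a time. Phrasing the goal as a homotopy equivalence rather than a compression of a specific sphere is likely to send you down a path where you must control \emph{all} of $\mathcal{D}_B$, which is both unnecessary and considerably harder. To repair the sketch you would need to (i) replace the simplex $\sigma$ by the sphere $S^{p-1}$ built from saddle pairs, (ii) prove the exclusion principle that a simplex of $\mathcal{D}_B$ in the image of an essential spherical map cannot contain both disks from one saddle, and (iii) recast the join step as a homotopy of the chosen map into $S^{p-1}*\bigsqcup_j\mathcal{D}_{B_j}$ rather than an ambient equivalence of complexes.
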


\begin{proposition}
The surface $B^n$ has topological index no smaller than $n$.
\end{proposition}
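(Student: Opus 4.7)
The plan is to follow the strategy of Proposition~6 in \cite{BacJohOEHITMS}, substituting our graph-complement distance bound (Theorem \ref{theorem:TopologyBoundsDistanceGraphComplement}) for the link-complement one used there. Suppose for contradiction that the topological index of $B^n$ is some $m < n$, and let $G = \bigsqcup_{i=1}^n G_i \subset M^n$ denote the union of the interface surfaces produced by the identifications $(G_K)_i \sim (G_L)_{i+1}$. The first step is to verify that each $G_i$ is incompressible in $M^n$: a compressing disk for $G_i$ lies in one of the adjacent copies $M'_j$, and maximal compression there yields either a sphere component (inconsistent with $M'_j$ having two boundary components) or a closed essential surface of genus at most $1$, in which case Theorem \ref{theorem:TopologyBoundsDistanceGraphComplement} would force $d_{\mathcal{G}}(B_j, \Gamma_j) \leq 2$. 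But Corollary \ref{cor:ACtoG}, Lemma \ref{lemma:DifferenceDistanceCurveComplexArcCurveComplex}, and the choice $d_{\mathcal{C}}(B, \mathcal{L}) \geq 32n + 7$ together give $d_{\mathcal{G}}(B_j, \Gamma_j) \geq 16n + 3$, a contradiction.

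With $G$ incompressible, Theorem \ref{theorem:BachmanNiceIntersection} allows an isotopy of $B^n$ so that it meets $G$ in $p \leq m$ saddles, with the topological indices of the components of $B^n \setminus n(G)$ summing to at most $m - p$. Each saddle contributes $-1$ to $\chi(B^n \cap n(G))$ while each regular intersection circle contributes $0$, so
\[
\sum_C (-\chi(C)) \;=\; -\chi(B^n) - p \;=\; 6n - p \;\leq\; 6n.
\]
After performing the available compressions (which only decrease $-\chi$) and discarding $\bd$-parallel pieces, each surviving component $C$ is either trivial or essential (incompressible, $\bd$-incompressible, and non-$\bd$-parallel) in its ambient $M'_j$. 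For any essential $C$, Theorem \ref{theorem:TopologyBoundsDistanceGraphComplement} gives
\[
-\chi(C) \;=\; 2g(C) + |\bd C| - 2 \;\geq\; \tfrac{1}{2}\, d_{\mathcal{G}}(B_j, \Gamma_j) - 1 \;\geq\; 8n.
\]
Since $8n > 6n$, no essential component can exist, so every component of $B^n \setminus n(G)$ is trivial.

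Consequently $B^n$ can be isotoped to be disjoint from $G$ entirely, placing each connected interface surface $G_i$ in the interior of one of the handlebodies $V^n$ or $W^n$ bounded by $B^n$. Since every closed incompressible surface in a handlebody is $\bd$-parallel, and $g(G_i) = 2 \neq 3n+1 = g(B^n)$, this is a contradiction. Hence the topological index of $B^n$ is at least $n$.

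The main obstacle will be the careful bookkeeping of the topological-index accounting during the simplification step: one must verify that the compressions and $\bd$-parallel-annulus removals used to reduce each component to trivial or essential form are compatible with the global inequality $\sum \mathrm{index}(C) + p \leq m$ provided by Theorem \ref{theorem:BachmanNiceIntersection}, so that no component retains a high index while simultaneously evading the essentiality-forcing bound. A secondary delicacy is that, unlike the torus boundary case of \cite{BacJohOEHITMS}, the interface surfaces $G_i$ here have genus $2$, so graph-bridge disks and arcs of $B^n \cap G_i$ must be handled uniformly in the arc-and-curve complex; in particular, one must track throughout that $B^n \cap M'_j$ can have boundary on two distinct higher-genus components $G_{L,j}$ and $G_{K,j}$, rather than a single torus as in the link case.
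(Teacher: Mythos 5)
Your overall strategy matches the paper's: apply Theorem \ref{theorem:BachmanNiceIntersection} to control the intersection with $G$, track Euler characteristic, and invoke Theorem \ref{theorem:TopologyBoundsDistanceGraphComplement} to force essential pieces to have large $-\chi$. However, your endgame diverges from the paper's, and it contains two genuine gaps.

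First, the inference ``since $8n > 6n$, no essential component can exist'' is a non sequitur. You have correctly computed $\sum_C(-\chi(C)) = 6n - p \le 6n$, but this sum can include terms with $-\chi(C) < 0$ (disk components), so a single essential piece with $-\chi \ge 8n$ is perfectly compatible with the total being at most $6n$; the excess is simply balanced by a large number of disk pieces. The paper's two-case analysis exists precisely to handle this: if some component has $\chi < -8n$, then more than $2n$ worth of positive Euler characteristic is forced among the remaining components, producing at least $n+1$ non-negative-$\chi$ pieces, one of which must have topological index zero (since the indices sum to $k < n$) and hence contradicts Theorem \ref{theorem:TopologyBoundsDistanceGraphComplement}; only when all components satisfy $\chi \ge -8n$ does the remaining argument apply. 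Your version drops this case distinction and the conclusion does not follow.

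Second, even granting ``no essential component after simplification,'' the step ``so every component of $B^n \setminus n(G)$ is trivial; consequently $B^n$ can be isotoped to be disjoint from $G$'' does not follow. You perform compressions and $\partial$-compressions to simplify the components, but these are surgeries, not isotopies: the fact that a component $C$ compresses or $\partial$-compresses down to trivial pieces does not make $C$ itself $\partial$-parallel, and so does not yield an isotopy of $B^n$ off of $G$. (For example, an incompressible but $\partial$-compressible piece can reduce to a $\partial$-parallel surface without being $\partial$-parallel itself.) The paper sidesteps this entirely: it never claims $B^n$ can be pushed off $G$. Instead it isolates a single copy $M'_j$ in which all pieces of $B^n_0 \cap M'_j$ have index zero, selects one such piece $B''$ with $\chi(B'') \ge -8n$, $\partial$-compresses it maximally to an essential surface with Euler characteristic still $\ge -8n$, and derives a numerical contradiction ($32n+7 \le -4\chi(B'')+6 \le 32n+6$) from Theorem \ref{theorem:TopologyBoundsDistanceGraphComplement}. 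Your closing contradiction about $G_i$ lying in a handlebody is a different (and, as written, unsupported) route.

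A minor point: your argument that each $G_i$ is incompressible is more elaborate than needed; this follows at once from the hypothesis that $M' = S^3 \setminus n(\Gamma)$ is boundary-irreducible, which is part of the construction.
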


\begin{proof} Suppose $S^n$ had topological index $\iota < n$. By Theorem 
\ref{theorem:BachmanNiceIntersection}, $B^n$ can be isotoped to a surface, $B^{n}_0$, so that $B^n_0$ meets $H = n(G)$ in $\sigma$ saddles, the sum of the topological indices of each component of $B^n_0 \rmv n(H)$ is $k$, and $k + \sigma \leq \iota$. Further, we may isotope any annular components of $B^n_0 \rmv H$ that are boundary parallel into $\bd H$ completely into $H$. Observe that this will have no effect on the Euler characteristic of $B^n_0 \rmv H$, nor any effect on the topological index, since such a component will have topological index zero. We consider two different cases. 

First, suppose that there is some component of $B^n_0 \rmv H$ with Euler characteristic less than $-8n$. In this case, because the Euler characteristic of $B^n_0$ is $-6n$, the sum of the Euler characteristics of the remaining components of $B^n_0 \rmv H$ must be greater than $2n$. This implies that there are at least $n + 1$ components of $B^n_0 \rmv H$ with non-negative Euler characteristic. Again, as the sum of the topological indices of each component of $B^n_0 \rmv H$ is $k < n$, there must be at least one component of $B^n_0 \rmv H$ with non-negative Euler characteristic and topological index zero. This is impossible by Theorem \ref{theorem:TopologyBoundsDistanceGraphComplement}.

Second, suppose that the Euler characteristic of each component of $B^n_0 \rmv H$ is bounded below by $-8n$. As the sum of the topological indices of each component of $B^n_0 \rmv H$ is $k < n$, there must be at least one index $j$ so that every component of $B^n_0 \cap M_j$ has topological index zero. Thus, there is a component, $B''$, of $B^n_0 \cap M_j$ which is incompressible and has Euler characteristic bounded below by $-8n$. 

While $B''$ may be boundary compressible, we may boundary compress $B''$ maximally, if necessary, to obtain a surface that is incompressible, boundary incompressible, and not boundary parallel. Since boundary compressions only increase Euler characteristic, the resulting essential surface has Euler characteristic bounded below by $-8n$.  

By Lemma \ref{lemma:DifferenceDistanceCurveComplexArcCurveComplex} and Corollary \ref{cor:ACtoG}, in $M_j$ with $B_j$ a copy of $B'$, we have $d_\mathcal{C}(B_j,\mathcal{L})\leq 2 d_\mathcal{BD}(B_j,\mathcal{L}) \leq 2(1+d_\mathcal{G}(B_j,\Gamma))$.   By Theorem \ref{theorem:TopologyBoundsDistanceGraphComplement}, $d_{\mathcal{G}}(B_j, \Gamma)\leq 2 ( 2 g(B'') + |\bd B''| - 1)$.  By our choice of $\mathcal{L}$ and the fact that $\chi(S)=2-2g(S)-|\bd S|$,  we have $32n+7\leq d_\mathcal{C}(B_j,\mathcal{L})\leq 2+2d_\mathcal{G}(B_j,\Gamma)\leq 8g(B'')+4|\bd B''| -2 =-4\chi(B'')+6$.  On the other hand we have just shown that $-8n\leq \chi(B'')$, a contradiction.  In either case, we find that the topological index of $B^n$ cannot be less than $n$. \end{proof}

\subsection{Hyperbolicity}

We have now shown that $M^n$ contains a surface of topological index $n$.  To prove Theorem \ref{theorem:MainTheorem} it remains to show that $M^n$ is hyperbolic.  

\begin{proposition}
\label{prop:Hyperbolic}
For $n > 1$, $M^n$ is hyperbolic.
\end{proposition}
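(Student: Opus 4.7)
The plan is to verify the hypotheses of Thurston--Perelman geometrization: since $M^n$ is a closed, orientable $3$-manifold that contains the incompressible genus-$2$ surface $G_1$ (so is Haken), it suffices to prove that $M^n$ is irreducible, atoroidal, and not Seifert fibered.

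The first step is to establish that the building block $M'=S^3\rmv n(\Gamma)$ is itself irreducible, $\bd$-irreducible, atoroidal, and anannular. The hypothesis $d_{\mathcal{C}}(B,\mathcal{L})\geq 32n+7$ gives, via Lemma~\ref{lemma:DifferenceDistanceCurveComplexArcCurveComplex} and Corollary~\ref{cor:ACtoG}, the bound $d_{\mathcal{G}}(B,\Gamma)\geq 16n+3 \gg 2$. A link-version of Theorem~\ref{theorem:Bachman-Schleimer}, proved by essentially the same argument, shows that the link complement $S^3\rmv n(\mathcal{L})$ is already hyperbolic, hence irreducible and $\bd$-irreducible. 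Since the arcs $\tau_L,\tau_K$ have endpoints on $\bd n(\mathcal{L})$, any essential sphere in $M'$ would bound a ball in $S^3\rmv n(\mathcal{L})$; such a ball cannot contain either arc (whose endpoints lie in $\bd (S^3\rmv n(\mathcal{L}))$), so the ball sits entirely in $M'$ and the sphere is inessential. A parallel analysis of compressing disks, tracing them back to the link complement, establishes $\bd$-irreducibility. With both properties in hand, Theorem~\ref{theorem:TopologyBoundsDistanceGraphComplement} immediately rules out essential tori (bound $2(2\cdot 1 + 0 - 1) = 2$) and essential annuli with boundary on $\bd M'$ (bound $2(2\cdot 0 + 2 - 1) = 2$).

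These properties of $M'$ transfer to $M^n$ by a standard cut-and-paste argument. An essential $2$-sphere in $M^n$, isotoped to minimize intersection with $G:=\bigcup_i G_i$, must lie in some $M'_i$ (since each $G_i$ is incompressible, any intersection curve can be removed), contradicting irreducibility of $M'$. An essential torus either lies in some $M'_i$ (contradicting atoroidality of $M'$) or intersects $G$ essentially, producing essential annuli in some $M'_i$ with boundary on $\bd M'$ (contradicting anannularity of $M'$).

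Finally, I would rule out that $M^n$ is Seifert fibered: any incompressible surface in a Seifert fibered space is isotopic to a vertical or horizontal surface. A vertical surface is a torus or Klein bottle, incompatible with $g(G_1)=2$. If $G_1$ were horizontal, then $M^n$ cut along $G_1$ would be an $I$-bundle, and by orientability a product $G_1\times I$. But cutting $M^n$ along $G_1$ for $n>1$ yields a linear chain of $n$ copies of $M'$ glued along the remaining $G_i$'s; for this chain to be $G_1\times I$, each $M'$ must itself be a product (each interior $G_i$ being isotopic to a level surface $G_1\times\{t\}$), which would furnish essential annuli $\gamma\times I$ for essential $\gamma\subset G_1$, contradicting the anannularity of $M'$. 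By Thurston's hyperbolization theorem for Haken manifolds, $M^n$ is therefore hyperbolic. The main technical obstacle I anticipate is the $\bd$-irreducibility of $M'$, which cannot be obtained directly from Theorem~\ref{theorem:TopologyBoundsDistanceGraphComplement} (which has it as a hypothesis) and must instead be derived from the specific structure of the bridge arcs $\tau_L,\tau_K$ as parallel into $B$, together with the high link distance of $\mathcal{L}$.
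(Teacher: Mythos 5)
Your proposal is correct in outline and takes a genuinely different, more structured route than the paper. The paper's proof is much more compressed: it directly considers an essential closed surface $S$ in $M^n$ with $\chi(S)\geq 0$, isotoped to meet $G$ minimally, and observes that either $S$ lies in some $M'_i$ (immediately contradicting Theorem~\ref{theorem:TopologyBoundsDistanceGraphComplement}) or the pieces $S\cap M'_i$ are planar surfaces that again contradict that theorem; primeness and atoroidality follow at once, and hyperbolicity is then invoked from Hakenness (via $G$). You instead first establish irreducibility, $\bd$-irreducibility, atoroidality, and anannularity of the building block $M'$, then transfer these to $M^n$ by cut-and-paste, and finally rule out a Seifert fibered structure explicitly. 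What your approach buys is transparency on two points the paper passes over silently: (1) Theorem~\ref{theorem:TopologyBoundsDistanceGraphComplement} \emph{requires} $M\rmv n(\Gamma)$ to be irreducible and $\bd$-irreducible, so this must be verified independently before the theorem can be applied at all — you correctly flag this as the main obstacle and sketch how to reduce it to the link complement plus the parallel-into-$B$ structure of $\tau_L,\tau_K$; (2) Thurston's hyperbolization for closed Haken manifolds requires ruling out Seifert fibered structures, and your vertical/horizontal analysis of $G_1$, reducing to anannularity of $M'$, handles this cleanly. One caveat worth noting: the paper's argument also implicitly needs to know that the planar pieces $S\cap M'_i$ are not only incompressible but $\bd$-incompressible and non-$\bd$-parallel to fall within the hypotheses of Theorem~\ref{theorem:TopologyBoundsDistanceGraphComplement}, so the extra care in your route is not wasted effort. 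Both proofs hinge on the same engine — the complexity bound from Theorem~\ref{theorem:TopologyBoundsDistanceGraphComplement} combined with the deliberately large bridge distance of $\mathcal{L}$ — so the mathematical core agrees.
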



\begin{proof}
Consider an essential surface $S$ in $M^n$ with Euler characteristic bounded below by zero, chosen to intersect $G$ minimally.  
If $S\cup G=\emptyset$, we arrive at a contradiction to Theorem \ref{theorem:TopologyBoundsDistanceGraphComplement} as $S$ would lie in one of the copies of $M'$. If $S\cup G\neq \emptyset$, the incompressibility and boundary incompressibility of $G$ guarantees that the curves of $S\cup G$ are essential in $S$.  Thus $S\cap M'_i$ is a collection of one or more planar surfaces for some $i$.   This again contradicts Theorem   \ref{theorem:TopologyBoundsDistanceGraphComplement}. Thus, in particular, $M^n$ is prime and atoridal for all $n$.  
Then, as $G$ is an incompressible surface in $M^n$, we conclude that $M^n$ is hyperbolic.
\end{proof}

%
%


\bibliographystyle{plain} 
\bibliography{HyperbolicManifoldsContainingHighTopologicalIndexSurfaces}

\begin{thebibliography}{10}

\bibitem{BacNTMSI}
D.~{Bachman}.
\newblock {Normalizing Topologically Minimal Surfaces I: Global to Local
  Index}.
\newblock {\em ArXiv e-prints}, math.GT 1210.4573, October 2012.

\bibitem{BacNTMSII}
D.~{Bachman}.
\newblock {Normalizing Topologically Minimal Surfaces II: Disks}.
\newblock {\em ArXiv e-prints}, math.GT 1210.4574, October 2012.

\bibitem{BacNTMSIII}
D.~{Bachman}.
\newblock {Normalizing Topologically Minimal Surfaces III: Bounded
  Combinatorics}.
\newblock {\em ArXiv e-prints}, math.GT 1303.6643, March 2013.

\bibitem{BacTITS3M}
David Bachman.
\newblock Topological index theory for surfaces in 3-manifolds.
\newblock {\em Geom. Topol.}, 14(1):585--609, 2010.

\bibitem{BacSDHSSC3M}
David Bachman.
\newblock Stabilizing and destabilizing {H}eegaard splittings of sufficiently
  complicated 3-manifolds.
\newblock {\em Math. Ann.}, 355(2):697--728, 2013.

\bibitem{BacJohOEHITMS}
David Bachman and Jesse Johnson.
\newblock On the existence of high index topologically minimal surfaces.
\newblock {\em Math. Res. Lett.}, 17(3):389--394, 2010.

\bibitem{BacSchlDBP}
David Bachman and Saul Schleimer.
\newblock Distance and bridge position.
\newblock {\em Pacific J. Math.}, 219(2):221--235, 2005.

\bibitem{BlaCamJohTayTomECSK}
Ryan Blair, Marion Campisi, Jesse Johnson, Scott~A. Taylor, and Maggy Tomova.
\newblock Exceptional and cosmetic surgeries on knots.
\newblock {\em Mathematische Annalen}, 367(1):581--622, 2017.

\bibitem{BlaTomYosHDBS}
Ryan Blair, Maggy Tomova, and Michael Yoshizawa.
\newblock High distance bridge surfaces.
\newblock {\em Algebr. Geom. Topol.}, 13(5):2925--2946, 2013.

\bibitem{ColMinSEMSFG3MI}
Tobias~H. Colding and William~P. Minicozzi, II.
\newblock The space of embedded minimal surfaces of fixed genus in a
  3-manifold. {I}. {E}stimates off the axis for disks.
\newblock {\em Ann. of Math. (2)}, 160(1):27--68, 2004.

\bibitem{ColMinSEMSFG3MII}
Tobias~H. Colding and William~P. Minicozzi, II.
\newblock The space of embedded minimal surfaces of fixed genus in a
  3-manifold. {II}. {M}ulti-valued graphs in disks.
\newblock {\em Ann. of Math. (2)}, 160(1):69--92, 2004.

\bibitem{ColMinSEMSFG3MIII}
Tobias~H. Colding and William~P. Minicozzi, II.
\newblock The space of embedded minimal surfaces of fixed genus in a
  3-manifold. {III}. {P}lanar domains.
\newblock {\em Ann. of Math. (2)}, 160(2):523--572, 2004.

\bibitem{ColMinSEMSFG3MIV}
Tobias~H. Colding and William~P. Minicozzi, II.
\newblock The space of embedded minimal surfaces of fixed genus in a
  3-manifold. {IV}. {L}ocally simply connected.
\newblock {\em Ann. of Math. (2)}, 160(2):573--615, 2004.

\bibitem{ColMinSEMSFG3MV}
Tobias~H. Colding and William~P. Minicozzi, II.
\newblock The space of embedded minimal surfaces of fixed genus in a 3-manifold
  {V}; fixed genus.
\newblock {\em Ann. of Math. (2)}, 181(1):1--153, 2015.

\bibitem{DuQiuNUUHSA3M}
Kun Du and Ruifeng Qiu.
\newblock A note on the uniqueness of unstabilized {H}eegaard splittings of
  amalgamated 3-manifolds.
\newblock {\em Topology Appl.}, 204:135--148, 2016.

\bibitem{GodBITC3S}
Hiroshi Goda.
\newblock Bridge index for theta curves in the {$3$}-sphere.
\newblock {\em Topology Appl.}, 79(3):177--196, 1997.

\bibitem{HarHSHMHB}
Kevin Hartshorn.
\newblock Heegaard splittings of {H}aken manifolds have bounded distance.
\newblock {\em Pacific J. Math.}, 204(1):61--75, 2002.

\bibitem{KneGFDM}
H.~Kneser.
\newblock Geschlossene fl{\"a}chen in driedimensionalen mannigfaltigkeiten.
\newblock {\em Jahresbericht der Dent. Math. Verein.}, 28:248--260, 1929.

\bibitem{LeeOTMSHG}
Jung~Hoon Lee.
\newblock On topologically minimal surfaces of high genus.
\newblock {\em Proc. Amer. Math. Soc.}, 143(6):2725--2730, 2015.

\bibitem{MosSHSHDK}
George Mossessian.
\newblock Stabilizing {H}eegaard splittings of high-distance knots.
\newblock {\em Algebr. Geom. Topol.}, 16(6):3419--3443, 2016.

\bibitem{NamBHDIFMCG}
Hossein Namazi.
\newblock Big {H}eegaard distance implies finite mapping class group.
\newblock {\em Topology Appl.}, 154(16):2939--2949, 2007.

\bibitem{OhsSakSMCGRHSBD}
Ken'ichi Ohshika and Makoto Sakuma.
\newblock Subgroups of mapping class groups related to {H}eegaard splittings
  and bridge decompositions.
\newblock {\em Geom. Dedicata}, 180:117--134, 2016.

\bibitem{OzaBPRSG}
Makoto Ozawa.
\newblock Bridge position and the representativity of spatial graphs.
\newblock {\em Topology Appl.}, 159(4):936--947, 2012.

\bibitem{RubAAR3S}
Joachim~H. Rubinstein.
\newblock An algorithm to recognize the {$3$}-sphere.
\newblock In {\em Proceedings of the {I}nternational {C}ongress of
  {M}athematicians, {V}ol.\ 1, 2 ({Z}\"urich, 1994)}, pages 601--611.
  Birkh\"auser, Basel, 1995.

\end{thebibliography}

\end{document}